\def\beq{\begin{equation}}
\def\eeq{\end{equation}}
\def\ba{\begin{array}}
\def\ea{\end{array}}
\def\R{\mathbb R}
\newtheorem{thm}{Theorem}[section]
\newtheorem{lm}[thm]{Lemma}
\newtheorem{prop}[thm]{Proposition}
\theoremstyle{definition}
\newtheorem{rem}[thm]{Remark}
\newtheorem{df}[thm]{Definition}
\theoremstyle{remark}
\begin{document}
\pagestyle{plain}
\title{Boundary pointwise regularity for the Poisson problem on uniform domain}
\author{Tianyu Guan\\
 \small{School of Mathematical Sciences, Shanghai Jiao Tong University}\\
 \small{Shanghai, China}\\\\
\small{ Lihe Wang}\\
 \small{Department of Mathematics, University of Iowa, Iowa City, IA, USA;\\ School of Mathematical Sciences, Shanghai Jiao Tong University}\\
 \small{Shanghai, China}\\\\
\small{Chunqin Zhou}\\
 \small{School of Mathematical Sciences, Shanghai Jiao Tong University}\\
 \small{Shanghai, China}}

\footnote{The third author was partially supported by NSFC Grant 12571223, 12031012 and STCSM Grant 24ZR1440700.}

\begin{abstract} In this paper, we study the boundary pointwise regularity for the Poisson problem on domains with rough boundaries, specifically uniform domains. In general, it is not straightforward to define weak solutions for non-zero boundary data on such domains. To address this, we introduce a novel definition of weak solutions tailored to the setting of uniform domains. Remarkably, this definition allows for the analysis of the regularity of weak solutions. In particular, by establishing an energy inequality, we prove the boundary pointwise $C^\alpha$ regularity by using compactness methods under the admissible condition. Furthermore,  by exploiting the the linear structure of solutions with respective to the harmonic functions, we establish boundary pointwise $C^{1,\alpha}$ and $C^{2,\alpha}$ regularities when the boundary data and the domain boundary are pointwise $C^{1,\alpha}$ and $C^{2,\alpha }$, respectively.
\end{abstract}

\maketitle

{\bf Keywords: Uniform domain, the Poisson problem, Boundary pointwise $C^{k,\alpha}$-regularity }

\section{Introduction}

Let $\Omega\subset \R^n$ be a bounded domain with $n\geq 3$.  For the divergence form elliptic boundary problem
\begin{equation}\label{1.1}
\left\{
\begin{array}{rcll}
-D_i(a_{ij}D_ju)+b_i(x)D_iu+c(x)u &=&f+D_i f^i\qquad&\text{in}~~\Omega,\\
u&=&g\qquad&\text{on}~~\partial\Omega, \\
\end{array}
\right.
\end{equation}
since the trace of $u\in H^1(\Omega)$ on $\partial \Omega$ is well defined when  $\partial \Omega\in C^{0,1}$, we usually say $u\in H^{1}(\Omega)$ is a weak solution to (\ref{1.1}) if it satisfies
\begin{equation*}
\int_{\Omega}a_{ij}(x)D_juD_i\varphi+b_i(x)D_iu\varphi+c(x)u\varphi dx=\int_{\Omega}f\varphi-f^iD_i\varphi dx
\end{equation*}
for all test function $\varphi\in H_0^{1}(\Omega)$, and $ u-g=0$ on $\partial\Omega$ in the sense of the trace. To some extent, we want to consider more general domains and consider the situations that beyond $u\in H^1(\Omega)$.

Under suitable conditions--such as uniform ellipticity of $a_{ij}(x)$, boundedness of $a_{ij}(x)$, $b_i(x), c(x)$, $f\in L^{\frac{q}{2}}(\Omega)$, $f^i\in L^{q}(\Omega)$, $g\in H^{1}(\Omega)\cap C^{\alpha}(\overline{\Omega}) $ with $q>n$ and $\alpha\in (0,1)$, and a uniform exterior cone condition on $\partial \Omega$--one can obtain the classical global $C^\alpha(\overline{\Omega})$ regularity  for solutions. It is known that the  uniform exterior condition can be weakened to  a $\theta$-admissible condition for  $\theta>0$, namely,
\begin{equation*}|\Omega^{c}\cap B_{r}(x_0)|\geq \theta|B_{r}(x_0)|,\nonumber
\end{equation*}
 for any $x_0\in\partial{\Omega}$ and $r>0$  (see Theorem 8.29 in \cite{DT} when $g=0$).

As demonstrated in the definition of weak solutions and their subsequent proofs, it is observed that Lipschitz continuity ($\partial \Omega\in C^{0,1}$
) is the minimal regularity condition required for the boundary H\"older regularity of weak solutions to divergence-form elliptic equations. This is, in part, due to Calder\'on-Stein's Theorem, which states that every Lipschitz domain is an extension domain for Sobolev spaces.

Jones \cite{Jo} showed that every $L$-uniform domain is  an extension domain for Sobolev spaces. Moreover, a finitely connected planar domain is an extension domain if and only if it is an $L$-uniform domain. Jones conjectured that $L$-uniform domains are the worst possible domains for which classical function-theoretic properties are the same as those of the Euclidean upper half-space.

In this paper, for simplicity, we focus on the boundary pointwise regularity for the following Dirichlet problem
\begin{equation}\label{1}
\left\{
\begin{array}{rcll}
-\Delta u&=&f\qquad&\text{in}~~\Omega,\\
u&=&g\qquad&\text{on}~~\partial\Omega, \\
\end{array}
\right.
\end{equation}
where $\Omega$ is $L$-uniform domains, $f\in L^2(\Omega)$ and $g\in C(\partial\Omega)$. More precisely we define:

\begin{df}\label{uni} For $L>1$, we call a domain $\Omega\subset \mathbb{R}^{n}$ an $L$-uniform domain if, for any pair of points $x_1 , x_2\in \Omega$, there exists a rectifiable curve $\gamma$ :$[0,1]\rightarrow \Omega$, s,t. $\gamma(0)=x_1,\gamma(1)=x_2$, and  its 1-dimensional Hausdorff measure\\
\begin{equation}\label{2}
 \mathcal{H}^{1}(\gamma)\leq L|x_1-x_2|,
 \end{equation}
 and
\begin{equation}\label{3} d(\gamma(t),\partial{\Omega})\geq \frac 1L \min\limits_{t\in[0,1]} \{|\gamma(t)-x_1|, |\gamma(t)-x_2|\}.
\end{equation}
\end{df}

$L$-uniform domains play a central role in analysis and geometry. Roughly speaking, they are connected in some quantitative sense, and for each such curve $\gamma(t)$, there exists a tube $T$ containing $\gamma(t)$ with width at $\gamma(t)$ of order $\min\{|x_1-\gamma(t)|,|x_2-\gamma(t)|\}$. The class of $L$-uniform domains includes convex domains in a ball, uniform Lipschitz domains, quasi-extremal distance domains, and the classical snowflake domain. Importantly, $L$-uniform domains can be highly nonrectifiable.

If $n-1\leq \alpha<n$, one can construct a domain $\Omega\subset \R^n$ such that $\Omega$ is an $L$-uniform domain and $\mathcal{H}^{\alpha}(U\cap\partial \Omega)>0$ with some neighborhood $U$ at a boundary point. If an $L$-uniform domain $\Omega$ has finite perimeter, the trace of $u\in H^1(\Omega)$ can be defined on the reduced boundary $\partial^*\Omega$ in a suitable sense (see \cite {AFP, DLW2023}). However, there is no general trace theorem for arbitrary $L$-uniform domains.

The remarkable feature of $L$-uniform domains is that they are Sobolev extension domains (see \cite{DLW2023, YS2006}). Consequently, the extension theorem and Sobolev inequalities hold on such domains, which are crucial for the regularity theory for both elliptic and parabolic equations.

A natural question arises: how to define solutions to boundary value problems in general domains with rough boundaries, where the trace embedding theorem does not apply?

For non-divergence form elliptic operators in a bounded domain
$$
L=M+c(x)=a_{ij}(x)\partial_{ij}+b_i(x)\partial _i+c(x),
$$ with $a_{ij}(x)$ continuous and uniformly elliptic in $\Omega$, $a_{ij}(x)$, $b_i(x)$, $c(x)$ bounded, it is classical that $L$ acts on functions $u\in C^2(\Omega)$ or $u\in W^{2,n}_{loc}(\Omega)$, and the maximum principle holds for $L$ in $\Omega$ if
$$
Lu\geq 0~~~\text{ in } \Omega
$$
and
\begin{equation}\label{cc}
\limsup_{x\rightarrow \partial \Omega} u(x)\leq 0
\end{equation}
imply $u(x)\leq 0$ in $\Omega$ when $c(x)\leq 0$. Moreover, if the coefficients and  $\partial \Omega$ possess some regularity, the maximum principle holds if and only if the principal eigenvalue of $-L$ for the Dirichlet problem is positive.

In \cite{BNV}, Berestycki, Nirenberg and Varadhan observed that in an arbitrary bounded domain $\Omega$, the condition (\ref{cc}) is in nondivergent form and solutions are locally smooth, in general, one cannot prescribe boundary values or know the precise behavior of solutions at every point of $\partial \Omega$. Instead,  they introduced a refined version of the maximum principle. They selected a barrier function $u_0>0$ in $\Omega$ satisfying $Mu_0=-1$, vanishing in a suitable sense on $\partial \Omega$. Then, instead of $(\ref{cc})$, they assumed that for every sequence $x_j\rightarrow\partial \Omega$ with $u_0(x_j)\rightarrow 0$,
$$
\limsup  u(x_j)\leq 0,
$$ and further showed that $L$ satisfies the refined maximum principle for the bounded above function in general domains. They also proved that the refined maximum principle holds if and only if the principal eigenvalue of $-L$ is positive.

For Dirichlet problem of the divergence form elliptic equation
\begin{equation*}
\left\{
\begin{array}{rcll}
-D_i(a_{ij}D_j u)&=&D_i f^i\qquad&\text{in}~~\Omega,\\
u&=&g\qquad&\text{on}~~\partial\Omega, \\
\end{array}
\right.
\end{equation*}
when $g=0$ and $\Omega$ is the general bounded domain, the weak solution is typically defined as a function $u\in H^1(\Omega)$ satisfying
\begin{equation*}
\int_{\Omega}a_{ij}D_{i}uD_j \varphi dx=\int_{\Omega}f^iD_i\varphi dx
\end{equation*}
for all test function $\varphi\in H_0^{1}(\Omega)$, with $u$'s $0-$extension is in $H^1(\R^n)$. Clearly, such a weak solution satisfies the energy inequality. In  \cite{BW04}, Byun and Wang  successfully proved $W^{1,p}$ estimates using the energy inequality for this kind of definition of weak solutions when $\Omega$ is a Reifenberg flat domain-a class of domains with rough boundaries. Later, Byun, Wang, and Zhou extended this result to quasi-linear elliptic equations (see \cite{BWZ}).

In \cite{LZLH}, Lian-Zhang-Li-Hong studied the Dirichlet problem of the divergence form elliptic equation
\begin{equation*}
\left\{
\begin{array}{rcll}
-D_i(a_{ij}D_j u)&=& f\qquad&\text{in}~~\Omega,\\
u&=&g\qquad&\text{on}~~\partial\Omega, \\
\end{array}
\right.
\end{equation*}
where $\Omega$ satisfies a geometric condition at $x_0\in \partial \Omega$:  there exist a constant $0<\nu <1$, a quasi-geometric sequence $\{r_k\}_{k=0}^{\infty}$ and a sequence $\{y_k\}_{k=0}^{\infty}$ with $y_k \in \partial B(x_0, r_k)$ such that
$$
\partial B(x_0,r_k)\cap B(y_k,\nu r_k)\subset \Omega^c.
$$
Assume the weak solution is continuous up to the boundary, $a_{ij}$ is uniformly elliptic, $f\in L^P(\Omega)$ with $p>\frac n2$ and $g\in C^{\alpha}(x_0)$, they proved that $u$ is $C^{\beta}$ at $x_0$ with
$$
|u(x)-u(x_0)|\leq C|x-x_0|^{\beta}(||u||_{L^\infty(\Omega\cap  B_1(x_0))}+||f||_{L^p(\Omega\cap B_1(x_0))}+[g]_{C^{\alpha}(x_0)})
$$ for all $x\in \Omega \cap  B_1(x_0)$,  where $0<\beta \leq \min (2-\frac np, \alpha)$.  Clearly, this boundary H$\ddot{o}$lder regularity begins with $C^0$ solutions.

However, when $g\neq 0$ and a typical continuous function or even for $C^\alpha$ with $\alpha<\frac 12$, the requirement of $u\in H^1(\Omega)$ is too strong. Instead, we propose a definition based on subsolutions of $(u-\sup_{\Omega\cap \partial B_r(x_0)}g)^+$ and $(u-\inf_{\Omega\cap \partial B_r(x_0)}g)^-$, which satisfy a boundary energy inequality. From these energy estimates, we establish the boundary pointwise $C^\alpha$ regularity via the compactness method and perturbation techniques. Moreover, this definition satisfies a linear structure with respective to harmonic functions, enabling us to prove $C^{1,\alpha}$ and $C^{2,\alpha}$ boundary regularities using the iterative technique. For simplicity, we only deal with the Poisson  problem.  For the existence of such solutions, we are preparing in our later paper.

\begin{df}\label{uni} Let $\Omega\subset\mathbb{R}^n$ be a bounded domain, $f\in L^2(\Omega)$, $g\in L^{\infty}({\partial \Omega})$. We say a function $u$ is a weak solution of
\begin{equation*}
\left\{
\begin{array}{rcll}
-\Delta u&=&f&\text{in}~~\Omega,\\
u&=&g\qquad&\text{on}~~\partial\Omega, \\
\end{array}
\right.
\end{equation*}
if: (i) $u\in H^{1}_{loc}(\Omega)$ is a local solution in $\Omega$, i.e.,
\begin{equation}\label{eq1}
\displaystyle\int_{\Omega}\nabla u\cdot\nabla \varphi dx=\displaystyle\int_{\Omega}f\cdot\varphi dx,\quad \forall~\varphi\in C_{0}^{\infty}(\Omega),
\end{equation}
(ii) for any $ 0<r\leq diam(\Omega)$ and $x_0\in\partial\Omega$, $(u-\sup\limits_{B_r(x_0)\cap \partial\Omega}g)^{+}$'s 0-extension in $B_r(x_0)\cap\Omega^{c}$ is in $H^{1}(B_r(x_0))$, and it is a weak subsolution in $B_r(x_0)$, i.e.,
\begin{equation}\label{eq2}
\displaystyle\int_{B_r(x_0)}\nabla[(u-\sup\limits_{B_r(x_0)\cap\partial\Omega}g)^+]\cdot\nabla \varphi dx\leq \displaystyle\int_{B_r(x_0)}f^+\varphi dx,\quad \forall~\varphi\in C_{0}^{\infty}(B_r(x_0)) \text { and } \varphi\geq 0.
\end{equation}
%where $\widetilde{f}$ is $f$'s 0-extension to $\{u\leq \sup\limits_{B_r(x_0)\cap \partial\Omega}g\}$ and outside $\Omega$,\\
(iii) for any $0<r\leq diam(\Omega)$ and $x_0\in\partial\Omega$, $(u-\inf\limits_{B_r(x_0)\cap \partial\Omega}g)^{-}$'s 0-extension  in $B_r(x_0)\cap\Omega^{c}$ is in $H^{1}(B_r(x_0))$, and it is a weak subsolution in $B_r(x_0)$, i.e.,
\begin{equation}\label{eq3}
\displaystyle\int_{B_r(x_0)}\nabla[(u-\inf\limits_{B_r(x_0)\cap\partial\Omega}g)^-]\cdot\nabla \varphi dx\leq \displaystyle\int_{B_r(x_0)}f^-\varphi dx,\quad \forall~\varphi\in C_{0}^{\infty}(B_r(x_0))  \text { and } \varphi\geq 0.
\end{equation}
%where $\widetilde{f}$ is $f$'s 0-extension to $\{u\geq \inf\limits_{B_r(x_0)\cap\partial\Omega}g\}$ and outside $\Omega$.
\end{df}

 In the above definition, we have extended $f(x)$ to zero in $\Omega^c$. In the sequent, we say $u$ is a weak solution of ($\ref{1}$) always in the sense of the definition $\ref{uni}$. The main theorem is stated as follows.

\begin{thm}\label{thm1}Let $\Omega$ be a $\theta$-admissible and $L$-uniform domain with $diam(\Omega)\geq C_0$ and $0\in\partial\Omega$. Suppose $u$ is a weak solution of $(\ref{1})$. Then there exists $0<\alpha_0<1$ such that for all $ 0<\alpha<\alpha_0$, if $f$ is $ C^{-2,\alpha}$ at $0$ in the $L^{2}$ sense, $g$ is $ C^{\alpha}$ at $0$, then  $u$ is $C^{\alpha}$ at $0$ in the $L^{2}$ sense, and
$$
[u]_{\mathcal{L}^{2,\alpha}(0)}\leq C\left(\displaystyle\frac{1}{|B_{1}|}\int_{\Omega\cap B_{1}}u^{2}dx
\right)^{\frac{1}{2}}+C\left([f]_{\mathcal{L}^{2,-2+\alpha}(0)}+||g||_{C^{\alpha}(0)}\right).
$$
\end{thm}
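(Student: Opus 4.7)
\medskip

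\noindent\textbf{Proof plan.} The strategy is the standard Caffarelli compactness/iteration scheme, driven by the energy inequality that the definition of weak solution was tailored to yield. First I would normalize: set
\[
M=\Bigl(\tfrac{1}{|B_1|}\int_{B_1\cap\Omega}u^2\,dx\Bigr)^{1/2}+[f]_{\mathcal{L}^{2,-2+\alpha}(0)}+\|g\|_{C^\alpha(0)},
\]
replace $u$ by $(u-g(0))/M$, and reduce to showing that under the assumptions
\[
\tfrac{1}{|B_1|}\int_{B_1\cap\Omega}u^2\,dx\le 1,\qquad [f]_{\mathcal{L}^{2,-2+\alpha}(0)}\le 1,\qquad \|g\|_{C^\alpha(0)}\le 1,\ g(0)=0,
\]
one has the Campanato-type bound $\tfrac{1}{|B_r|}\int_{B_r\cap\Omega}u^2\,dx\le C r^{2\alpha}$ for all $0<r\le 1$. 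The whole problem thus reduces to a single one-step decay lemma: there exist $\rho\in(0,1)$ and $\eta>0$, depending only on $n,\theta,L,\alpha$, such that if additionally $\|f\|_{L^2(B_1)}+\|g\|_{L^\infty(B_1\cap\partial\Omega)}\le\eta$, then $\tfrac{1}{|B_\rho|}\int_{B_\rho\cap\Omega}u^2\,dx\le\rho^{2\alpha}$. Iterating this at geometric scales $\rho^k$ (after rescaling, using that the class of admissible uniform domains is scaling-invariant and that the $C^{-2,\alpha}$ and $C^\alpha$ hypotheses are homogeneous) will yield the theorem.

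\smallskip

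\noindent The core decay lemma I would prove by contradiction and compactness. Assume no such $\rho,\eta$ exist; then one gets sequences $u_k,f_k,g_k$ solving the Dirichlet problem on $L$-uniform $\theta$-admissible domains $\Omega_k$ with $0\in\partial\Omega_k$, with $\|f_k\|_{L^2}\to 0$, $\|g_k\|_{L^\infty(B_1\cap\partial\Omega_k)}\to 0$, $g_k(0)=0$, and $\tfrac{1}{|B_1|}\int u_k^2\le 1$, yet the conclusion fails at every scale $\rho$. Apply the energy inequality obtained by testing (\ref{eq2}) and (\ref{eq3}) (with $k=\pm\|g_k\|_{L^\infty}$) against $\eta^2(u_k-\sup g_k)^+$ and $\eta^2(u_k-\inf g_k)^-$ with a Lipschitz cutoff: because the 0-extensions of these truncations already lie in $H^1(B_r)$, the Caccioppoli inequality holds uniformly in $k$ on balls $B_{3/4}$, giving a uniform $H^1(B_{3/4})$ bound on the 0-extensions $\tilde u_k$. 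Passing to a subsequence, $\tilde u_k\rightharpoonup \tilde u_\infty$ weakly in $H^1$ and strongly in $L^2$, and the complements $\Omega_k^c$ converge in the local Hausdorff sense to a closed set $F$ with $|F\cap B_r|\ge\theta|B_r|$ at the origin (admissibility passes to the limit). The function $\tilde u_\infty$ vanishes on $F$ (from the $L^\infty\to 0$ condition on $g_k$ and the sub/super-solution 0-extension properties), and is harmonic in $B_{3/4}\setminus F$ (from (\ref{eq1})).

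\smallskip

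\noindent The remaining ingredient is De Giorgi--Nash boundary Hölder continuity for $\tilde u_\infty$ at $0$: since $\tilde u_\infty\equiv 0$ on a set of positive density at each point of $F$ near $0$, the standard density-of-zero-set argument gives $\tilde u_\infty\in C^{\alpha_0}$ at $0$ with $\tilde u_\infty(0)=0$ for some $\alpha_0>0$ depending only on $n$ and $\theta$. Choosing $\rho$ small so that $[\tilde u_\infty]_{C^{\alpha_0}(0)}\rho^{\alpha_0-\alpha}<\tfrac12$ produces
\[
\tfrac{1}{|B_\rho|}\int_{B_\rho}|\tilde u_\infty|^2\,dx\le \tfrac14\,\rho^{2\alpha},
\]
and the $L^2$ strong convergence of $\tilde u_k\to\tilde u_\infty$ contradicts the failure of the decay for large $k$. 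The main technical obstacle is the passage to the limit: one must check that the limit domain (i.e., the set $\Omega_\infty=B_1\setminus F$) inherits enough structure from the $L$-uniform $\theta$-admissibility of the $\Omega_k$ for the weak harmonicity of $\tilde u_\infty$ in $\Omega_\infty$ and for the boundary Hölder estimate to apply. This is exactly where the definition introduced in Definition~\ref{uni} (via its 0-extension sub/super-solution condition) pays off, as it survives weak limits even though the domains $\Omega_k$ only have rough boundaries. Once the lemma is established, the iteration is mechanical: at each step one rescales by $\rho$, verifies that the new data still satisfy the smallness hypotheses (using the $C^{-2,\alpha}$ and $C^\alpha$ homogeneity), and concludes by summing the geometric series.
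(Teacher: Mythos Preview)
Your plan is essentially the paper's approach: normalize, prove a one-step decay lemma via compactness (energy estimate $\Rightarrow$ uniform $H^1$ bounds on the truncations $\Rightarrow$ $L^2$ convergence to a limit harmonic function vanishing on the limit boundary $\Rightarrow$ boundary De~Giorgi--Nash $C^{\alpha_0}$ at $0$), then iterate at scales $\rho^k$. Two points deserve emphasis. First, the passage ``admissibility passes to the limit'' that you flag as the main obstacle is in fact a standalone result in the paper (Proposition~\ref{lm2}): the compactness of $L$-uniform, $\theta$-admissible domains in the Hausdorff distance, with the limit again $L$-uniform and $\theta$-admissible, is proved there by a PDE argument (auxiliary harmonic functions with a quantitative lower bound $u\ge C\,d(\cdot,\partial\Omega)^\beta$, plus a Vitali covering estimate to show $|(\partial\Omega_k)^\varepsilon|\to 0$); it is not a soft Hausdorff-limit fact, since Hausdorff convergence alone does not control measures. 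Second, be careful with your statement that ``the 0-extensions $\tilde u_k$'' are uniformly bounded in $H^1(B_{3/4})$: only the truncations $(u_k-\sup g_k)^+$ and $(u_k-\inf g_k)^-$ extend by zero into $H^1$, not $u_k$ itself. The paper handles this by showing these truncations converge in $L^2$ and then deducing (since $\|g_k\|_\infty\to 0$) that $u_k^\pm$ are Cauchy in $L^2$; in the Key Lemma it also separately controls $\int_{\Omega_\lambda\setminus\tilde\Omega}|u|^2$ via Sobolev embedding on the truncations and the smallness of $|\Omega_\lambda\Delta\tilde\Omega_\lambda|$. Once these two technical ingredients are supplied, your outline matches the paper's proof.
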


If the boundary $\partial \Omega$ has the better  pointwise geometric property, such as  $\partial\Omega\in C^{k,\alpha}(x_0)$ for $k=1, 2$, we can establish the pointwise $C^{k,\alpha}$ boundary regularity for weak solutions.

\begin{thm}\label{thm2}Let $\Omega$ be $C^{1,\alpha}$ at $0\in\partial\Omega$. Suppose $u$ is a weak solution of $(\ref{1})$. For any $0<\alpha<1$, if $f$ is $ C^{-1,\alpha}$ at $0$ in the $L^{2}$ sense, $g$ is $ C^{1,\alpha}$ at $0$, then  $u$ is $C^{1,\alpha}$ at $0$ in the $L^{2}$ sense, and
$$
[u]_{\mathcal{L}^{2,1+\alpha}(0)}\leq C\left(\displaystyle\frac{1}{|B_{1}|}\int_{\Omega\cap B_{1}}u^{2}dx
\right)^{\frac{1}{2}}+C\left([f]_{\mathcal{L}^{2,-1+\alpha}(0)}+||g||_{C^{1,\alpha}(0)}\right).
$$
\end{thm}

\begin{thm}\label{thm3}Let $\Omega$ be $C^{2,\alpha}$ at $0\in\partial\Omega$. Suppose $u$ is a weak solution of $(\ref{1})$. For any $0<\alpha<1$, if $f$ is $ C^{\alpha}$ at $0$ in the $L^{2}$ sense, $g$ is $ C^{2,\alpha}$ at $0$, then $u$ is $C^{2,\alpha}$ at $0$ in the $L^{2}$ sense, and
$$
[u]_{\mathcal{L}^{2,2+\alpha}(0)}\leq C\left(\displaystyle\frac{1}{|B_{1}|}\int_{\Omega\cap B_{1}}u^{2}dx
\right)^{\frac{1}{2}}+C\left(||f||_{\mathcal{L}^{2,\alpha}(0)}+||g||_{C^{2,\alpha}(0)}\right).
$$
\end{thm}

\begin{rem}
Once H\"older continuity is established in the $L^2$ sense, in conjunction with the standard interior H\"older regularity, classical H\"older regularity follows from the Campanato embedding theorem (see Section 2).
\end{rem}

\begin{rem}[Divergence form problem with small BMO coefficients]
Although we state and prove the main results for the Poisson problem, the definition of solutions and the compactness method used in this paper are not restricted to the Poisson problem. The same argument can be adapted to divergence form elliptic boundary problem
\begin{equation*}
\left\{
\begin{array}{rcll}
-D_i(a_{ij}D_j u)&=& f\qquad&\text{in}~~\Omega,\\
u&=&g\qquad&\text{on}~~\partial\Omega, \\
\end{array}
\right.
\end{equation*}
where \(A(x)=(a_{ij}(x))\) satisfies
\[
\lambda |\xi|^2\le A(x)\xi\cdot \xi\le \Lambda |\xi|^2,
\qquad \xi\in\mathbb R^n, ~~x\in \Omega,
\]
and has sufficiently small mean oscillation. More precisely, one may assume that \(A\) is \((\delta,R)\)-vanishing in the sense that
\[
\sup_{0<\rho\le R}\sup_{y\in\mathbb R^n}
\sqrt{\frac{1}{|B_\rho(y)\cap{\Omega}|}\int_{B_\rho(y)\cap{\Omega}}
|A(x)-\overline{A}_{B_\rho(y)\cap {\Omega}}|^2\,dx}
\le \delta,
\]
for \(\delta>0\) sufficiently small and some $R>0$.

Indeed, under this assumption, after rescaling around a boundary point, the coefficients are close in average to a constant matrix. The approximation lemma used for the Poisson equation is then replaced by the corresponding compactness lemma comparing \(u\) with the solution of the constant-coefficient problem
\begin{equation*}
\left\{
\begin{array}{rclll}
-\operatorname{div}(A_0\nabla h) &=& 0~~\text{ in } \tilde{\Omega}_{\frac 12},\\
h&=& 0~~\text{ on }  \partial_w \tilde{\Omega}_{\frac 12},
\end{array}
\right.
\end{equation*}
where $\tilde{\Omega}$ is $L$-uniform domain and $\theta$-admissible domain.

%Since constant-coefficient uniformly elliptic equations have the same boundary H\"older decay estimates as harmonic functions, the %improvement-of-flatness argument and the iteration remain unchanged, up to constants depending also on the ellipticity constants \%(\lambda,\Lambda\).

Consequently, the boundary  H\"older estimates proved in this paper remain valid for such divergence form elliptic boundary problem with sufficiently small BMO coefficients. We restrict the presentation to the Poisson problem in order to avoid additional technical notation and to emphasize the main boundary compactness mechanism.
\end{rem}

To study the boundary behavior for solutions, one typically flattens the curved boundary via a transformation, which introduces low-order terms and variable coefficients. However, this approach often requires additional smoothness on the boundary and boundary value, and such proofs cannot be extended to our setting.

In our case, the main difficulty lies in the fact that $\partial \Omega$ is far from being a graph domain, so we cannot flatten it. For viscosity solutions of non-divergence form elliptic problems, Ma and Wang established boundary pointwise $C^{1,\alpha} $ regularity using a barrier argument iteration (see \cite{MW}). Later, perturbation and compactness techniques were used to prove pointwise $C^{1,\alpha} $ and $C^{2,\alpha} $ regularity under corresponding pointwise geometric boundary conditions (see \cite{LZ, LZ2020, SS, Wa}).

Our approach is heavily influenced by \cite{Ca,Ca1989,BW04, LZ, LZ2020}. We begin with a definition of weak solutions on domains with the irregular boundaries, establish energy estimates near the boundary,  and use compactness techniques to obtain the approximate solutions on new irregular domains. The error between the original and new domains is controlled because the energy on narrow regions is small in some order. For divergence form elliptic boundary problems, our results are new. In particular, when using the compactness method, we establish the compactness of a sequence of $L$-uniform and $\theta$-admissible domains. Compared to \cite{LW, DLW2023}, we provide a new of proof using PDE techniques. To prove $C^{1,\alpha}$ and $C^{2,\alpha}$ boundary regularities under the assumption that the boundary data and domain boundary are pointwise $C^{1,\alpha}$ and $C^{2,\alpha }$, respectively, the key step is to establish the linear structure of weak solutions (in the sense of Definition \ref{uni}) with respective to harmonic functions. It is worth noting that it is the first work to define weak solutions on the irregular domains with non-zero boundary data and to study the boundary pointwise regularity for such solutions.

\section{Preliminaries}
In this section, we collect some notation and standard facts that will be used throughout the paper. We first recall the geometric assumptions on the domains, then state the basic analytic tools, including the Harnack inequality, boundary H\"older estimates, and a pointwise characterization for a function.

Throughout the paper, for $x_0\in \mathbb R^n$ and $r>0$, we write
\[
B_r(x_0)=\{x\in \mathbb R^n: |x-x_0|<r\},\qquad B_r=B_r(0),
\]
and
\[
\Omega_r(x_0)=\Omega\cap B_r(x_0),\qquad \Omega_r=\Omega\cap B_r.
\]
If $x_0\in \partial\Omega$, we denote the boundary portion by
\[
\partial_w\Omega_r(x_0)=\partial\Omega\cap B_r(x_0).
\]
When no confusion is possible, we omit the center $x_0$.

 Also the notation $E\Delta F:=(E\backslash F)\cup (F\backslash E)$ represents the symmetric difference of two sets $E$ and $F$. The letter $C$  denotes a universal constant depending on known quantities such as $n$, $p$, the geometric conditions on $\partial \Omega$. The constant $C$ may vary from line to line.
\subsection{Geometric assumptions}

We first recall the definition of uniform domains.

\begin{df}[$L$-uniform domain]
Let $L>1$. A domain $\Omega\subset \mathbb R^n$ is called an $L$-uniform domain if for any $x_1,x_2\in \Omega$, there exists a rectifiable curve $\gamma:[0,1]\to \Omega$ such that $\gamma(0)=x_1$, $\gamma(1)=x_2$, and
\[
\mathcal H^1(\gamma)\leq L|x_1-x_2|,
\]
and
\[
d(\gamma(t),\partial\Omega)\geq \frac1L
\min\{|\gamma(t)-x_1|,|\gamma(t)-x_2|\},
\qquad t\in[0,1].
\]
\end{df}

We also use the following measure density condition.

\begin{df}[$\theta$-admissible domain]
Let $\theta\in(0,1)$. A bounded domain $\Omega\subset \mathbb R^n$ is called $\theta$-admissible if
\[
|\Omega^c\cap B_r(x_0)|\geq \theta |B_r|
\]
for every $x_0\in \partial\Omega$ and every $0<r<\operatorname{diam}(\Omega)$.
\end{df}

%\begin{df}[Interior density class]
%Let $c>0$. We say that a set $E\subset\mathbb R^n$ belongs to the class $\mathcal D_c$ if
%\[
%|B_r(x)\cap E|\geq c r^n
%\]
%for every $x\in\partial E$ and every $0<r<\operatorname{diam}(E)$.

%\end{df}

It follows from the quantitative corkscrew property of uniform domains, the following elementary consequences of a uniform domain  will be used in the sequel. See, for instance, Proposition 2.4 in \cite{DLW2023}.

\begin{lm}
Let $\Omega$ be an $L$-uniform domain. Then there exists a constant $c=c(n,L)>0$ such that for every $x_0\in\partial\Omega$ and every $0<r<\operatorname{diam}(\Omega)$, there exists a ball
\[
B_{cr}(y)\subset \Omega\cap B_r(x_0).
\]
Moreover, if $\operatorname{diam}(\Omega)\geq C_0>0$, then $\Omega$ contains an interior ball whose radius is bounded from below by a positive constant depending only on $n,L$ and $C_0$.
\end{lm}

%\begin{rem}
%The first assertion is a quantitative interior corkscrew-type property. It follows directly from the definition %of uniform domains by joining an interior point to a point away from the boundary and using the thickness %condition along the connecting curve.
%\end{rem}

\begin{prop}[Interior density of uniform domains]
Let $\Omega$ be an $L$-uniform domain. If $\operatorname{diam}(\Omega)\geq C_0>0$, then there exists a constant
$c=c(n,L, \frac {\operatorname{diam}(\Omega)}{C_0})>0$ such that
\[
|\Omega\cap B_r(x_0)|\geq c r^n
\]
for every $x_0\in\partial\Omega$ and every
$0<r<\operatorname{diam}(\Omega)$.
\end{prop}

We shall also use the fact that uniform domains are Sobolev extension domains. More precisely, if $\Omega$ is an $L$-uniform domain, then for each $1\leq p<\infty$ there exists a bounded linear extension operator
\[
E:W^{1,p}(\Omega)\to W^{1,p}(\mathbb R^n),
\]
with operator norm depending only on $n,p$ and $L$. In particular, Sobolev and Poincar\'e inequalities hold on $L$-uniform domains with constants depending only on $n,p$ and $L$; see \cite{Jo,DLW2023}.

\
\

Next we give the pointwise geometric conditions for the boundary, adapted from \cite{LZ2020}.
\begin{df} Let $x_0\in\partial\Omega$. For a positive integer $k$, we say that $\partial\Omega$ is $C^{k,\alpha}$ at $x_0$, denoted by $\partial\Omega\in C^{k,\alpha}(x_0)$, if there exist $r_0>0$, a coordinate system $\{x_1,\dots,x_n\}$(obtained by translating and rotating the original coordinates), and a $k-th$ order polynomial $P(x')$ with $P(0)=0$, $DP(0)=0$, such that $x_0=0$ in this coordinate system, and
    \begin{equation}\label{d1}
        B_{r_0}\cap \{(x',x_n)|x_n>P(x')+K|x'|^{k+\alpha}\}\subset B_{r_0}\cap \Omega,
    \end{equation}
and
\begin{equation}\label{d2}
     B_{r_0}\cap \{(x',x_n)|x_n<P(x')-K|x'|^{k+\alpha}\}\subset B_{r_0}\cap \Omega^{c}.
\end{equation}
We define
$$
[\partial\Omega]_{C^{k,\alpha}(x_0)}=\inf\{K|(\ref{d1}), (\ref{d2})~ \text{hold}\},
$$
and
$$
  \Vert \partial\Omega\Vert_{C^{k,\alpha}(x_0)}=\Vert P\Vert + [\partial\Omega]_{C^{k,\alpha}(x_0)},
$$
where $\Vert P\Vert=\sum_{m=0}^{k}|D^m P(x_0)|$.
\end{df}

\begin{rem}
From the definition, we see  that $\partial \Omega$ may not be represented as the graph of a function near $x_0$.
\end{rem}

\subsection{Harnack inequality and interior estimates}

We recall the standard Harnack inequality for nonnegative harmonic functions.

\begin{lm}[Harnack inequality]
Let $u\geq0$ be harmonic in $B_{2r}(x_0)$. Then
\[
\sup_{B_r(x_0)}u\leq C\inf_{B_r(x_0)}u,
\]
where $C$ depends only on $n$.

\end{lm}

Consequently, if a nonnegative harmonic function is defined in a domain and two points can be connected by a chain of balls whose doubled balls remain inside the domain, then the values of the function at the two points are comparable. More precisely, if
\[
B_{2\rho_j}(x_j)\subset \Omega,\qquad j=1,\dots,N,
\]
and the balls form a Harnack chain connecting $x$ to $y$, then
\[
u(x)\leq C^N u(y),
\]
where $C=C(n)$.

We shall use this form of the Harnack inequality in combination with the uniform curve condition. In particular, the uniformity of the domain gives Harnack chains whose length is controlled only by $n$ and $L$ at comparable scales.

We also recall the standard local boundedness and H\"older estimates. If $u$ is harmonic in $B_{2r}(x_0)$, then there exist constants $C>0$ and $\alpha\in(0,1)$, depending only on $n$, such that
\[
\|u\|_{L^\infty(B_r(x_0))}
\leq C\left(\frac{1}{|B_{2r}(x_0)|}\int_{B_{2r}(x_0)}u^2\,dx\right)^{1/2},
\]
and
\[
[u]_{C^\alpha(B_r(x_0))}
\leq C r^{-\alpha}
\left(\frac{1}{|B_{2r}(x_0)|}\int_{B_{2r}(x_0)}u^2\,dx\right)^{1/2}.
\]

\subsection{Boundary H\"older estimates}

The exterior density condition implies boundary H\"older regularity for solutions vanishing on the boundary. We shall use the following standard estimate.

\begin{lm}[Boundary H\"older estimate]
Let $\Omega$ be a $\theta$-admissible domain, $x_0\in\partial\Omega$, and $0<r<\operatorname{diam}(\Omega)$. Suppose that $v\in H^1(\Omega_{2r}(x_0))$ satisfies
\[
-\Delta v=0\qquad \text{in }\Omega_{2r}(x_0),
\]
and that the zero extension of $v$ belongs to $H^1(B_{2r}(x_0))$. Then there exist constants $C>0$ and $\alpha\in(0,1)$, depending only on $n$ and $\theta$, such that
\[
|v(x)|\leq C\left(\frac{|x-x_0|}{r}\right)^\alpha
\left(\frac{1}{|\Omega_{2r}(x_0)|}\int_{\Omega_{2r}(x_0)}v^2\,dx\right)^{1/2}
\]
for every $x\in \Omega_r(x_0)$.
\end{lm}

Equivalently, if $v$ is harmonic in $\Omega_{2r}(x_0)$ and vanishes on the boundary portion in the Sobolev sense, then
\[
\operatorname{osc}_{\Omega_\rho(x_0)}v
\leq C\left(\frac{\rho}{r}\right)^\alpha
\left(\frac{1}{|\Omega_{2r}(x_0)|}\int_{\Omega_{2r}(x_0)}v^2\,dx\right)^{1/2},
\quad 0<\rho<r.
\]
This is the classical boundary H\"older estimate under the exterior measure density condition; see, for instance, \cite[Theorem 8.29]{DT}.

\subsection{Pointwise norms}

We finally recall the pointwise norms used in the statement of the main results. Let $x_0\in\partial\Omega$, $0<\alpha<1$,  $f$ is a function defined on $\Omega$. We say $f$ is $C^{k,\alpha}$ at $x_0$ in the $L^2$ sense for $k=-2,-1$ if
$$
[f]_{\mathcal{L}^{2,k+\alpha}(x_0)}=:\sup_{0<r<1}\frac 1{r^{k+\alpha}}(\frac 1{|B_r(x_0)|}\int_{\Omega\cap B_r(x_0)}|f(x)|^2dx)^{\frac 12}<\infty.
$$
We say $f$ is $C^{k,\alpha}$ at $x_0$ for $k=0, 1, 2$ if
$$
[f]_{C^{k,\alpha} (x_0)}=: \inf_{P_k}\sup_{x\in \Omega\cap B_1\backslash\{x_0\}}\frac{|f(x)-P_k|}{|x-x_0|^{k+\alpha}}<+\infty.
$$
 We say $f$ is $C^{k,\alpha}$ at $x_0$ in the $L^2$ sense for $k=0,1,2$ if
$$
[f]_{\mathcal{L}^{2,k+\alpha}(x_0)}=:\inf_{P_k}\sup_{0<r<1}\frac 1{r^{k+\alpha}}(\frac 1{|B_r(x_0)|}\int_{\Omega\cap B_r(x_0)}|f(x)-P_k|^2dx)^{\frac 12}<\infty.
$$  Here $P_k$ is taken over the set of $k-th$ order polynomials. It is well known that when $f\in C^{k,\alpha}(0)$ for $k=0,1,2$, we can define the norm of $||f||_{C^{k,\alpha}(x_0)}$. For example, if $f\in C^{2,\alpha}(x_0)$, since $f$ is twice differentiable at $x_0$, we define $||f||_{C^{2,\alpha}(x_0)}=|f(x_0)|+|Df(x_0)|+||D^2f(x_0)||+[f]_{C^{2,\alpha}(x_0)}$. Similarly, we define $||f||_{\mathcal{L}^{2,\alpha}(x_0)}=|f(x_0)|+[f]_{\mathcal{L}^{2,\alpha}(x_0)}$.

\begin{thm}[Campanato Embedding Theorem]
Let $E\subset\mathbb R^n$ satisfy the measure density condition
\[
|E\cap B_r(x)|\geq c_0 r^n
\]
for every $x\in E$ and every $0<r<r_0$. Let $0<\alpha<1$ and $m\in\mathbb N\cup\{0\}$.

Assume that for every $x\in E$ and every $0<r<r_0$, there exists a polynomial $P_{x,r}$ of degree at most $m$ such that
\[
\left(\frac{1}{|E\cap B_r(x)|}\int_{E\cap B_r(x)} |u-P_{x,r}|^2\,dy\right)^{1/2}
\leq M r^{m+\alpha}.
\]
Then $u$ has a representative in $C^{m,\alpha}(E)$, and
\[
\|u\|_{C^{m,\alpha}(E)}
\leq C\left(\|u\|_{L^2(E)}+M\right),
\]
where $C$ depends only on $n,m,\alpha,c_0$ and $r_0$.

\end{thm}
\section{the compactness of L-uniform and $\theta$-admissible domains}

The compactness of L-uniform domains was previously discussed by Li-Wang \cite{LW} and Du-Li-Wang \cite{DLW2023}. In particular, in \cite{DLW2023}, the authors showed the L-uniform domains have uniformly bounded nonlocal perimeters and thus have an $L^1$ limit up to a subsequence by the fractional Sobolev compact embedding theorem. Here we will use the PDE method and  Vitali covering lemma to show the compactness of a sequence of  L-uniform and $\theta$-admissible domains in the Hausdorff distance. Our proposition is following.

\begin{prop}[Compactness of L-uniform domains]\label{lm2}
Let $\{\Omega_k\}\subset B_2$ be a sequence of $L$-uniform and $\theta$-admissible domains. If $\inf\limits_{k}diam(\Omega_k)\geq C_0$
, then $\{\Omega_k\}$ is  compact in the Hausdorff distance.
\end{prop}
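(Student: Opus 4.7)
The plan is to use a Blaschke-type selection argument to extract a Hausdorff-convergent subsequence, and then to verify directly that the limit inherits both the $L$-uniform property and the $\theta$-admissibility. Since all $\Omega_k\subset B_2$, I would regard both the closures $\overline{\Omega_k}$ and the complements $F_k:=\overline{B_2\setminus\Omega_k}$ as closed subsets of the compact ball $\overline{B_2}$. Blaschke's selection theorem, applied simultaneously to both families, yields a subsequence (still denoted $\{\Omega_k\}$) such that $\overline{\Omega_k}\to K$ and $F_k\to F$ in Hausdorff distance, and the candidate limit is $\Omega_\infty:=B_2\setminus F$. The diameter lower bound $\mathrm{diam}(\Omega_k)\geq C_0$ together with $\theta$-admissibility prevents degeneracy: $K$ must have diameter $\geq C_0$ and $F$ cannot fill $B_2$, so $\Omega_\infty$ is a nontrivial open set.

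To pass $L$-uniformity to $\Omega_\infty$, fix $x,y\in\Omega_\infty$; since $F_k\to F$ and $x,y\notin F$, pick $x_k,y_k\in\Omega_k$ with $x_k\to x$ and $y_k\to y$. Definition~\ref{uni} supplies a rectifiable curve $\gamma_k\subset\Omega_k$ from $x_k$ to $y_k$ satisfying (\ref{2})--(\ref{3}). Reparametrizing by arclength on $[0,1]$, the family $\{\gamma_k\}$ is uniformly Lipschitz with constant $\leq L|x_k-y_k|$, so Arzel\`a--Ascoli produces a subsequence converging uniformly to a Lipschitz curve $\gamma:[0,1]\to\overline{\Omega_\infty}$ joining $x$ and $y$. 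Lower semicontinuity of length preserves (\ref{2}) in the limit, and the distance bound (\ref{3}) applied along $\gamma_k$, combined with Hausdorff convergence $F_k\to F$, forces $d(\gamma(t),F)\geq\frac{1}{L}\min\{|\gamma(t)-x|,|\gamma(t)-y|\}>0$ at every interior $t$. Hence $\gamma$ actually lies in $\Omega_\infty$ with the same constant $L$.

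For $\theta$-admissibility, given $x_0\in\partial\Omega_\infty$ and $0<r\leq\mathrm{diam}(\Omega_\infty)$, Hausdorff convergence $F_k\to F$ produces $x_k\in\partial\Omega_k$ with $x_k\to x_0$, so $|\Omega_k^c\cap B_r(x_k)|\geq\theta|B_r|$. To transfer this inequality to the limit, one needs $\chi_{F_k}\to\chi_F$ in $L^1_{\mathrm{loc}}(B_2)$, which is strictly stronger than Hausdorff convergence of the closed sets. This is where the Vitali covering and the PDE ingredient enter: cover a neighborhood of $\partial\Omega_\infty\cap B_{2r}(x_0)$ by disjoint balls centered at approximating points $x_k^j\in\partial\Omega_k$ on suitably chosen scales, use the $\theta$-admissible bound at each such point to produce a uniform lower bound on the Lebesgue measure of $F_k$ inside the cover, and use a uniform Wiener-type capacity estimate in $L$-uniform domains (read off from a harmonic extension with vanishing boundary values, which decays at a uniform Hölder rate) to preclude oscillatory loss of mass of $\partial\Omega_k$. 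Once this $L^1$ convergence is established, Fatou yields $|\Omega_\infty^c\cap B_r(x_0)|\geq\theta|B_r|$.

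The principal obstacle is precisely this last $L^1$ upgrade, since Hausdorff convergence of closed sets is in general strictly weaker than $L^1$ convergence of their indicators (thin filaments of $F_k$ can evaporate in the limit), and the $\theta$-admissible hypothesis is exactly what rules out such thinning uniformly in $k$. The delicate point is orchestrating the Vitali cover so that each selected ball simultaneously captures a $\theta$-sized chunk of $\Omega_k^c$ for all large $k$ and lies in a prescribed thin neighborhood of $\partial\Omega_\infty$; I expect the PDE reformulation (in terms of capacitary potentials propagating continuously along the sequence) to do the bulk of the work here.
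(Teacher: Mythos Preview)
Your extraction step via Blaschke selection (applied simultaneously to $\overline{\Omega_k}$ and $F_k=\overline{B_2\setminus\Omega_k}$) is a legitimate and more elementary alternative to the paper's route. The paper instead constructs auxiliary harmonic functions $u_k$ on $\Omega_k\setminus\overline{B_R(y_0)}$ (equal to $1$ on the inner sphere, $0$ on $\partial\Omega_k$), extends them by $0$ and $1$ to all of $B_2$, applies Arzel\`a--Ascoli to the $u_k$ themselves, and then \emph{defines} the limit domain as $\Omega_\infty:=\{u_\infty>0\}$; the two-sided estimate $C_2\,d(x,\partial\Omega_k)^\beta\leq u_k(x)\leq C_3\,d(x,\partial\Omega_k)^\alpha$ is what makes this definition agree with the Hausdorff limit. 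Both approaches reach a candidate limit, and your verification of $L$-uniformity via Arzel\`a--Ascoli on the connecting curves $\gamma_k$ is essentially the same as the paper's.

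The genuine gap is in your $\theta$-admissibility step. You correctly diagnose that Hausdorff convergence of $F_k$ must be upgraded toward $L^1$ convergence of $\chi_{\Omega_k}$ (equivalently, one needs $|(\partial\Omega_k)^\varepsilon|\to 0$ uniformly in $k$ as $\varepsilon\to 0$), and you gesture toward a Vitali cover together with a ``PDE ingredient''. But the ingredient you name --- the H\"older decay of a harmonic extension at the boundary --- is the \emph{upper} bound $u_k(x)\leq C\,d(x,\partial\Omega_k)^\alpha$, which follows from $\theta$-admissibility and carries no information about the volume of the boundary layer. What the paper actually uses, and isolates as Lemma~\ref{lmlower}, is the \emph{lower} bound $u_k(x)\geq C_1\,d(x,\partial\Omega_k)^\beta$, obtained by a Harnack-chain argument that exploits $L$-uniformity in an essential way. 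In the paper's Vitali cover of $(\partial\Omega_k\cap B_1)^\varepsilon$ by balls $B_{C\varepsilon}(x_i)$ centered on $\partial\Omega_k$, each selected ball contains (again by $L$-uniformity, Remark~\ref{2.1}) an interior ball $B_{C'\varepsilon}(y_i)\subset\Omega_k$ on which $u_k\gtrsim\varepsilon^\beta$; combined with the uniform global energy $\int_{\Omega_k}|\nabla u_k|^2\leq C$ and a boundary Poincar\'e inequality on each ball, this forces the number of Vitali balls to be $O(\varepsilon^{\delta_0-n})$ for some $\delta_0>0$, hence $|(\partial\Omega_k)^\varepsilon|\leq C\varepsilon^{\delta_0}$. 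Your plan, as written, does not contain this lower-bound input (nor its proof via Harnack chains), and the Wiener/H\"older-decay statement you invoke points in the wrong direction for closing the argument. Note also that the relevant Vitali cover is of $\partial\Omega_k$, not of $\partial\Omega_\infty$ as you suggest; the uniformity in $k$ of the boundary-layer estimate is precisely what transfers the measure inequality to the limit.
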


\begin{rem}\label{2.1}For $L$-uniform domain $\Omega$, from \cite{DLW2023}, there are two obvious facts. One fact is that for any $x\in \partial \Omega$ and $0<r< diam (\Omega) $ there is a constant $C=C(L, n)>0$ such that there is a ball of radius $Cr$ contained in $B_r(x)\cap \Omega$. The other fact is that if $\Omega$ is an $L$-uniform domain with $diam(\Omega)\geq C_0$, then there is $r=r(L, n, C_0)>0$ such that $\Omega$ contains a ball of radius $r$.
\end{rem}

Before proving the proposition, we show the following lower bound estimate.
\begin{lm}[Lower bound estimate]\label{lmlower}
Let $\Omega\subset B_2$ be an $L$-uniform domain. We assume that
 $diam(\Omega)\geq 4R$, and $\overline{B_{R}(y_0)}\subset \Omega$. If  $u$ satisfies that
\begin{eqnarray*}
\left\{
\begin{array}{rcll}
-\Delta u&=&0\qquad&\text{in}~~\Omega\backslash\overline{B_{R}(y_0)},\\
u&=&1\qquad&\text{on}~~\partial B_{R}(y_0), \\
u&=&0\qquad&\text{on}~~\partial \Omega,
\end{array}
\right.
\end{eqnarray*}
then we have $u(x)\geq C_{1}d^{\beta}(x,\partial\Omega)$ for any $x\in \Omega\backslash\overline{B_{R}(y_0)}$. Here $C_1$ and $\beta$ are the positive constant which are dependent only on $R$, $L$ and $n$.
\end{lm}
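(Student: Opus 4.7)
The plan is to prove Lemma~\ref{lmlower} by a Harnack chain argument adapted to the $L$-uniform geometry: propagate a lower bound on the positive harmonic function $u$ from a reference point $p_*$ (where $u$ is bounded below by a universal constant) to $x$ along a chain of overlapping balls in $\Omega\setminus\overline{B_R(y_0)}$. The chain will have length $N=O(\log(R/d))$ with $d=d(x,\partial\Omega)$, and iterated Harnack loses a universal factor $c_H\in(0,1)$ per step, yielding $u(x)\geq c_H^{N}\geq C_1 d^{\beta}$ with $\beta=C\log(1/c_H)>0$.

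First I would produce the reference point $p_*\in\Omega\setminus\overline{B_R(y_0)}$ with $u(p_*)\geq c_1(L,n)>0$. Using $\mathrm{diam}(\Omega)\geq 4R$, pick $q\in\Omega$ with $|q-y_0|\geq 2R$ and connect $q$ to $y_0$ by an $L$-uniform curve $\eta$. Let $z_*$ be its first intersection with $\partial B_R(y_0)$ and $p_*$ the point on $\eta$ with $|p_*-y_0|=3R/2$. The inequality (\ref{3}) then gives the crucial estimates
\[
d(z_*,\partial\Omega)\geq R/L,\qquad d(p_*,\partial\Omega)\geq R/(2L),\qquad d(p_*,\partial B_R(y_0))=R/2,
\]
so a ball $B(p_*,\rho_0)$ of radius $\rho_0\sim R/L$ lies inside $\Omega\setminus\overline{B_R(y_0)}$. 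Since $u=1$ on the smooth piece $\partial B_R(y_0)\cap B(z_*,R/L)$, standard boundary $C^{1}$ regularity for harmonic functions with constant Dirichlet data produces a point $z_*'\in B(z_*,R/(2L))\setminus\overline{B_R(y_0)}$ with $d(z_*',\partial B_R(y_0))\sim R/L$ and $u(z_*')\geq 1/2$. A short Harnack chain of universally bounded length (using the straight radial segment from $z_*'$ to $p_*$ or the $L$-uniform curve between them) then gives $u(p_*)\geq c_1(L,n)$.

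For general $x\in\Omega\setminus\overline{B_R(y_0)}$ with $d=d(x,\partial\Omega)$, I would connect $x$ to $p_*$ by a curve $\gamma$ in $\Omega$ furnished by $L$-uniformity, rerouting if necessary around $\overline{B_R(y_0)}$ by gluing the uniform curve from $x$ to $y_0$ with $\eta$ at their first hits on $\partial B_R(y_0)$ (the smoothness of $\partial B_R(y_0)$ and the estimate $d(z_*,\partial\Omega)\geq R/L$ make such a detour feasible). Cover $\gamma$ by balls $B_i=B(\gamma(t_i),r_i)$ with $\gamma(t_0)=p_*$, $\gamma(t_N)=x$, $B(\gamma(t_i),2r_i)\subset\Omega\setminus\overline{B_R(y_0)}$, $|\gamma(t_{i+1})-\gamma(t_i)|\leq r_i/2$, and $r_i$ a fixed fraction of $\min\{d(\gamma(t_i),\partial\Omega),d(\gamma(t_i),\partial B_R(y_0))\}$. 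Using (\ref{3}), $d(\gamma(t),\partial\Omega)\geq L^{-1}\min(|\gamma(t)-p_*|,|\gamma(t)-x|)$, and a standard doubling count yields $N\leq C(L,n)\bigl(1+\log(R/d)\bigr)$. Applying Harnack in each $B_i$ and composing,
\[
u(x)\geq c_H^{N}\,u(p_*)\geq c_1\,c_H^{\,C(1+\log(R/d))}=C_1\,d^{\beta},
\]
with $\beta=C\log(1/c_H)>0$ and $C_1=C_1(R,L,n)>0$, as claimed.

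The principal obstacle will be making the Harnack chain honest: $L$-uniformity controls $d(\gamma(t),\partial\Omega)$ via (\ref{3}), but not $d(\gamma(t),\partial B_R(y_0))$, so if the $L$-uniform curve in $\Omega$ threads through $\overline{B_R(y_0)}$ the chain must be rerouted. The smoothness and connectedness of $\partial B_R(y_0)$ together with the uniform-distance estimate $d(z_*,\partial\Omega)\geq R/L$ at any such hit point permit a controlled detour at the cost of a bounded multiplicative factor in the chain length. A secondary technical point is proving the baseline bound $u(p_*)\geq c_1$ without any a priori lower bound on $d(\overline{B_R(y_0)},\partial\Omega)$; this is handled by working entirely inside $B(z_*,R/L)\subset\Omega$, where the boundary regularity of harmonic functions against the smooth sphere is quantitative in $L,n$ alone.
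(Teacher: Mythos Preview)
Your proposal is essentially correct and rests on the same mechanism as the paper's proof: a Harnack chain in the $L$-uniform domain, with chain length $O(\log(1/d))$ producing the power-type lower bound $C_1 d^\beta$. The difference is purely organizational. The paper does not build one long chain from a reference point to $x$; instead it argues level by level. Setting $(\Omega)_s=\{x\in\Omega:d(x,\partial\Omega)>s\}$, it first fixes a small $\delta=\delta(R,L)$ and obtains $u\geq C(\delta)$ on $(\Omega)_\delta\setminus\overline{B_R(y_0)}$ by Harnack. Then it proves the inductive step: if $u\geq C_2$ on $(\Omega)_{2^{-k}\delta}\setminus\overline{B_R(y_0)}$, then $u\geq C_3 C_2$ on $(\Omega)_{2^{-(k+1)}\delta}\setminus(\Omega)_{2^{-k}\delta}$, with $C_3=C_3(L,n)\in(0,1)$ independent of $k$. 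For this step, given $x$ at scale $2^{-(k+1)}\delta$, they run the $L$-uniform curve from $x$ to the center $y_0$, stop at the first point $\gamma(T)$ with $d(\gamma(T),\partial\Omega)=2^{-k}\delta$, use (\ref{3}) to get $|\gamma(T)-x|\leq L\,2^{-k}\delta$, and then take a second $L$-uniform curve $\gamma_2$ from $x$ to $\gamma(T)$ of length $\leq L^2 2^{-k}\delta$, covered by at most $8L^2$ Harnack balls. Iterating yields $u(x)\geq C_3^{k}C(\delta)$ and hence the claimed bound with $\beta=-\log C_3/\log 2$.

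What each buys: the paper's dyadic scheme makes the chain count transparent (a fixed number of balls per dyadic step) and localizes each step near $\partial\Omega$, so the obstacle $\overline{B_R(y_0)}$ is automatically avoided in the inductive step once $\delta$ is small. Your single-chain approach is more direct but forces you to confront the rerouting around $\overline{B_R(y_0)}$ and to produce the reference value $u(p_*)\geq c_1$ explicitly; you correctly flag both as the technical points. Either route is acceptable.
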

\begin{proof}
   We only need to prove the lower bound holds when $x$ near boundary. For simplicity, we denote $(\Omega)_{s}=\{x\in\Omega~|d(x,\partial\Omega)> s\}$.
Let $\delta$  small to be determined later, by Harnack inequality, we have
$$u(x)\geq C(\delta)~,\forall x\in(\Omega)_{\delta}\backslash\overline{B_{R}(y_0)}.$$
We next claim that if $u(y)\geq C_2,~\forall y \in (\Omega)_{2^{-k}\delta}\backslash\overline{B_{R}(y_0)}$, then for any $x\in(\Omega)_{2^{-(k+1)}\delta}\backslash(\Omega)_{2^{-k}\delta}$, $u(x)\geq C_{3}C_2$, where $C_3=C_3(L,R,n)\in (0,1)$ is a universal constant independent of $k$ and $\delta$.

To prove the claim, by the definition of uniform domain, for any $x\in(\Omega)_{2^{-(k+1)}\delta}\backslash(\Omega)_{2^{-k}\delta}$, there exists a rectifiable curve $\gamma$ connects $x$ and $y_0$ with $\gamma (0)=x$ and $\gamma(1)=y_0$, which satisfies (\ref{2}) and (\ref{3}).  Let $T=\min\limits_{t}\{t~|~d(\gamma(t),\partial\Omega)\geq 2^{-k}\delta\}$, then choose $\delta$ such that $L2^{-k}\delta<R-2^{-k}\delta$ (for example, $\delta<\frac{R}{1+L}$ independent of $k$), we deduce that
$$2^{-k}\delta=d(\gamma(T),\partial\Omega)\geq \frac{1}{L}|\gamma(T)-\gamma(0)|,$$
which implies
\begin{eqnarray}\label{uni1}
    |\gamma(T)-\gamma(0)|\leq L2^{-k}\delta.
\end{eqnarray}
Again by the definition of uniform domain, there exists a rectifiable curve $\gamma_2$(may different from $\gamma$) connects $x$ and $\gamma(T)$, with $\gamma _2(0)=x$ and $\gamma_2(1)=\gamma (T)$, which satisfies (\ref{2}) and (\ref{3}). Together with $(\ref{uni1})$, we have
\begin{eqnarray}\label{uni2}
    \mathcal{H}^{1}(\gamma_2)\leq L|\gamma(T)-\gamma(0)|\leq L^{2}2^{-k}\delta.
\end{eqnarray}
Recall that $d(\gamma(T),\partial\Omega)=2^{-k}\delta$, $d(x,\partial\Omega)\geq 2^{-(k+1)}\delta$, by Harnack inequality, there exists $p,q\in \gamma_2$, $d(p, \gamma(T))=\frac{2^{-k}\delta}{2}$, $d(q,x)=\frac{2^{-(k+1)}\delta}{2}$, such that $C_2\leq u(\gamma(T))\leq C(n)u(p)$, $u(q)\leq C(n)u(x)$. Choose $r=\min\{\frac{2^{-(k+1)}\delta}{L},\frac{2^{-(k+2)}\delta}{L}\}$, by the finite length of $\gamma_2$, we can connect $p,q$ by at most $8L^2$ balls of radius $r$. By chain of Harnack balls, we can prove the claim.

For any $x$ near the boundary, there exists $k$, such that $2^{-(k+1)}\delta\leq d=d(x,\partial\Omega)\leq 2^{-k}\delta$. By the claim above,
we have $u(x)\geq C_{3}^{k}C(\delta)$. Together with $k+1\geq -\frac{\ln d-\ln\delta}{ln2}$, it follows that
$$u(x)\geq d^{\frac{-\ln C_{3}}{\ln 2}}(\frac{\delta}{2})^{\frac{\ln C_{3}}{\ln2}}C(\delta).$$
We choose $\beta=-\frac{\ln C_{3}}{\ln 2}$, and $C_1=(\frac{\delta}{2})^{\frac{\ln C_{3}}{\ln2}}C(\delta)$.  Thus we complete the proof of the lemma.
\end{proof}

\
\

{\bf{Proof of Proposition $\ref{lm2}$:}} First, by definition and Remark \ref{2.1}, for any sequence $\{x_k\}$ with $x_k\subset\partial\Omega_k$ and with $x_k\rightarrow x_0$ up to a subsequence,  there exists $\{y_k\}$ such that $y_k\subset \Omega_k$, and further there exists $R>0$ depending only on $L$, $C_0$ and $n$ such that, for any integer $k\geq 1$, $d(\partial\Omega_k,y_k)\geq 4R$ and $y_k\rightarrow y_0$ up to a subsequence and $B_{R}(y_0)\subset \Omega_k$ for all $k$.

 Next, we will construct harmonic functions on $\Omega_k$, and show that the limit of these harmonic functions is also a harmonic function, which induces a limit domain we need. To be specific, we construct harmonic functions $\{u_k\}$ as follow:
\begin{eqnarray*}
\left\{
\begin{array}{rcll}
-\Delta u_k&=&0\qquad&\text{in}~~\Omega_{k}\backslash\overline{B_{R}(y_0)},\\
u_k&=&1\qquad&\text{on}~~\partial B_{R}(y_0), \\
u_k&=&0\qquad&\text{on}~~\partial \Omega_k.
\end{array}
\right.
\end{eqnarray*}
Clearly, there exist a constant $\alpha\in (0,1) $ depending only with $\theta, L, C_0$ and $n$ such that for all $ k\geq 1 $  we have $$u_k\in C^{\alpha}(\overline{\Omega_{k}\backslash\overline{B_{R}(y_0)}}),~\|u_k\|_{C^\alpha(\overline{\Omega_{k}\backslash\overline{B_{R}(y_0)})}} \leq C_1 $$
for some constant $C_1=C_1(\theta,n,R)$. We claim that it holds for some constant $\beta>0$ depending only with $ M, C_0$ and $n$ that
\begin{eqnarray}\label{keyeq}
   C_{2}d^{\beta}(x,\partial\Omega_k)\leq u_k(x)\leq C_{3}d^{\alpha}(x,\partial\Omega_k),~\forall x\in\Omega_{k}\backslash\overline{B_{R}(y_0)},~\forall k\geq1
\end{eqnarray}
for some constant $C_2=C_2(L, n, R)$ and $C_3=C_3(\theta,n,R,\alpha)$. The right side is exactly $C^{\alpha}$ regularity of harmonic funtions up to the boundary. The left side of $(\ref{keyeq})$ holds for Lemma $\ref{lmlower}$.

By properties of harmonic functions, we can extend $u_k$ to $0$ outside $B_{2}\backslash\Omega_{k}$, to $1$ in $B_{R}(y_0)$, which we still denote as $u_k$. Notice that $u_k$ is equicontinuous and uniform bounded, by Arzela-Ascoli lemma, $u_k$ has a  convergent subsequence. For simplicity, we denote the subsequence as $u_k$, the limitation as $u_{\infty}$. By our definition, $\Omega_{k}=\{u_{k}>0\}$. Set $\Omega_\infty=\{u_\infty>0\}$. We claim that
$\Omega_k$ converges to $\Omega_{\infty}$ in the Hausdorff distance. To this purpose, it is sufficient to show, for any $ \varepsilon>0$, there is a positive integer $N=N(\varepsilon)$, such that, for $k>N$, the following properties hold:
\begin{itemize}
    \item[(I)] $\Omega_k\subset (\Omega_{\infty})^{\varepsilon}$,
    \item[(II)] $\Omega_{\infty}\subset (\Omega_{k})^{\varepsilon}$,
    \item[(III)] $(\Omega_{\infty})_{\varepsilon}\subset\Omega_k$,
    \item[(IV)] $(\Omega_k)_{\varepsilon}\subset\Omega_\infty$.
\end{itemize}
Here $(\Omega_\infty)_{\varepsilon}=\{x\in\Omega_\infty~|d(x,\partial\Omega_\infty )> \varepsilon\}$,  $(\Omega_\infty)^{\varepsilon}=\{x\in \R^n~|d(x, \Omega_\infty ) < \varepsilon\}$, and the similar notations of $(\Omega_k)_\varepsilon$ and $(\Omega_k)^\varepsilon$.

For (I), without loss of generality, suppose that $x \in\Omega_k\backslash \overline{B_{R}(y_0)}$ and $\varepsilon >0$ is small, by definition we have $u_{k}(x)>0$. Because $\Omega_k$ is M-uniform, there exists $y_k \in B_{\varepsilon}(x)\cap\Omega_{k}$, where $d(y_k,\partial\Omega_k)\geq C\varepsilon$ for $C=C(n,L)$. From lower bound estimate, we have $u_k(y_k)\geq C_2(C\varepsilon)^{\beta}$. By diagonal technique, $u_{\infty}(y_{\infty})>0$ for $y_k\rightarrow y_\infty$ up to a subsequence. Then we can deduce that $y_{\infty}\in\Omega_{\infty}$, which implies (I).  Similarly, (IV) can be shown.

For (II), suppose not, there exists $x\in\Omega_{\infty}$, such that $B_{\varepsilon}(x)\cap\Omega_{k}=\emptyset$ for some small $\varepsilon>0$. Then by definition, $\forall y\in B_{\varepsilon}(x)$, $u_{k}(y)=0$. Let $k\rightarrow\infty$, we have $u_{\infty}(y)=0$ in $B_{\varepsilon}(x)$, which is a contradiction to $u_\infty(x)>0$ since $x\in\Omega_{\infty}$.

For (III), suppose not, there exists $x\in(\Omega_{\infty})_{\varepsilon}$ such that $x\notin\Omega_k$ for some small $\varepsilon>0$. Then by definition, $u_{k}(x)=0$. Let $k\rightarrow\infty$, we have $u_{\infty}(x)=0$ , which is a contradiction.

Now we show that  $\Omega_{\infty}$ is also an $L$-uniform domain. In fact, $\forall x_1,~x_2\in\Omega_{\infty}$, there exists $\varepsilon>0$ small, for sufficiently large integer $k$, such that $x_1,~x_2\in (\Omega_{\infty})_{\varepsilon}\subset\Omega_k\subset (\Omega_{\infty})^{\varepsilon}$.
Since $\Omega_k$ is an $L$-uniform domain, then there exists a rectifiable curve $\gamma(t)\subset\Omega_k$ connecting $x_{1},~x_2$ satisfies property (\ref{2}) and (\ref{3}). It is sufficient to show $\gamma(t)\subset \Omega_\infty$ to the purpose. From our assumption, $B_{\varepsilon}(x_i)\subset\Omega_{\infty}$ for $i=1,2$. Also for all $ y\in \gamma (t)$ with $d(y,x_{i})\geq\varepsilon$ for $i=1,2$, we have $d(y,\partial\Omega_{k})\geq \frac{\varepsilon}{L}$. This implies that $\gamma(t)\subset(\Omega_k)_{\frac{\varepsilon}{L}}\subset\Omega_{\infty}$.

Finally we prove that $\Omega_{\infty}$ is a $\theta$-admissible domain. Without loss of generality, we assume that $0\in\partial\Omega_\infty$, and prove that $|B_1(0)\cap\Omega^{c}_{\infty}|\geq \theta|B_1(0)|$.

Fixing $\varepsilon>0$ small, for any $ x\in B_1(0)\cap\partial\Omega_{k}$, from Remark \ref{2.1}, there exist
$C=C(L,n)>0$, $C'=C'(L,n)>0$ and $y\in \Omega_k$, such that
%$B_{C\varepsilon}(x)\supset B_{\varepsilon}(0)\cap\Omega_k$,
$y\in B_{C\varepsilon}(x)$, $B_{C'\varepsilon}(y)\subset B_{C\varepsilon}(x)$, $B_{2C'\varepsilon}(y)\subset \Omega_k$. Then $\cup_{x\in\partial\Omega_k\cap B_1(0)} B_{C\varepsilon}(x)\supset (\partial\Omega_k\cap B_1(0))^{\varepsilon}$. Hence there is a countable sequence $\{B_{C\varepsilon}(x_i)\}$ of disjoint balls such that $\cup_{i}5B_{C\varepsilon}(x_i)\supset (\partial\Omega_k\cap B_1(0))^{\varepsilon}$ via Vitali covering lemma.

We compute the Hausdorff measure of $(\partial\Omega_k\cap B_1(0))^\varepsilon $ first. Set $\delta_0=2\beta-2>0$ since $\beta$ in Lemma \ref{lmlower} can be chosen $\beta>1$. Then we have
\begin{eqnarray*}
    \sum_{i}\omega_n 5^{n-\delta_0}C^{n-\delta_0} \varepsilon^{n-\delta_0}
      &\leq& C\sum_{i}\frac{\varepsilon^{n-\delta_0}} {|B_{C'\varepsilon}(y_i)|}\int_{{B_{C'\varepsilon}(y_i)}} 1dx\\
    &\leq& C\sum_{i}\frac{\varepsilon^{n-\delta_0}}{\varepsilon^{n}}\varepsilon^{-2\beta} \int_{{B_{C'\varepsilon}(y_i)}} \varepsilon^{2\beta}dx\\
    &\leq & C\sum_{i}\varepsilon^{-2\beta-\delta_0}\int_{{B_{C'\varepsilon}(y_i)}} u_{k}^{2}dx\\
    &\leq& C\sum_{i}\varepsilon^{-2\beta-\delta_0}\int_{{B_{C\varepsilon}(x_i)}\cap\Omega_k} u_{k}^{2}dx\\
     &\leq& C\sum_{i}\varepsilon^{2-2\beta-\delta_0}\int_{{B_{C\varepsilon}(x_i)}\cap \Omega_k} |\nabla u_{k}|^{2}dx\\
     &\leq & C \int_{\Omega_k}|\nabla u_k|^2dx\\
     &\leq & C,
\end{eqnarray*}
since   $\int_{\Omega_k}|\nabla u_k|^2dx$ is uniformly finite.  Also, a modification of above calculation implies that
$$ \mathcal{H}^{n}((\partial\Omega_k\cap B_1(0))^\varepsilon)\leq C\varepsilon^{\delta_0}.$$
Hence by the face (I)-(IIII), we have for all small $\varepsilon>0$ and the sufficiently large $k$
\begin{eqnarray*}
  |\Omega_{\infty}\cap B_1(0)|&=& |(\Omega_{k})_\varepsilon\cap B_1(0)|+ |(\Omega_{\infty}\backslash (\Omega_k)_\varepsilon)\cap B_1(0)|\\
&\leq & |\Omega_{k}\cap B_1(0)|+|(\Omega_{k})^\varepsilon\backslash (\Omega_k)_\varepsilon\cap B_1(0)|\\
&\leq & |\Omega_{k}\cap B_1(0)|+|(\partial\Omega_{k}\cap B_1(0))^\varepsilon|\\
&\leq & (1-\theta)|B_1(0)|+C\varepsilon^{\delta_0}.
\end{eqnarray*}
Let $\varepsilon\rightarrow 0$, we have $$ |\Omega_{\infty}\cap B_1(0)|\leq (1-\theta)|B_1(0)|,$$
i.e. $$|\Omega^{c}_{\infty}\cap B_1(0)|\geq \theta|B_1(0)|.$$

\begin{rem}\label{HD}Let $\Omega$ be an $L$-uniform and $\theta$-admissible bounded domain. If $diam(\Omega)\geq C_0$, then from the proof of above proposition there exist constants $\delta_0>0$ and $C>0$ such that
$$ \mathcal{H}^{n}((\partial\Omega)^\varepsilon)\leq C\varepsilon^{\delta_0}$$ for all small $\varepsilon>0$. \end{rem}

%Now let $H_{\delta}$ be a continuous approximation of the Heavy-side function where $\delta<\frac{\varepsilon}{2}$ and consider the limit:
%\begin{eqnarray*}
 %   |\Omega_{k}\cap B_1(0)|&=&\int_{B_1(0)} H(u_{k})dx\\
 %   &\geq& \int_{B_1(0)} H_{\delta}(u_{k})dx-C\varepsilon^{\delta_0} \\
%&=&-C\varepsilon^{\delta_0}+\int_{B_1} H_{\delta}(u_{k})dx.
%\end{eqnarray*}
%Let $k\rightarrow \infty$, and $\varepsilon$ be arbitrary, we have
%$$|\Omega_{\infty}\cap B_1(0)|\leq \lim_{k\rightarrow \infty}\int_{B_1} H_{\delta}%(u_{k})dx\leq \lim_{k\rightarrow \infty}|\Omega_{k}\cap\B_1(0)|=(1-A)|B_1|,$$
%i.e.,
%$$|\Omega^{c}_{\infty}\cap B_1(0)|\geq A|B_1|.$$

\qed

\section{boundary pointwise $C^{\alpha}$ estimate}
In this section, we assume that $\Omega$, $f$ and $g$ satisfy the conditions in Theorem \ref{thm1}. Without loss of generality, we assume that $diam(\Omega)>1$. We will prove Theorem \ref{thm1} by using compactness and perturbation techniques. An iterative scheme in the scaling sense  is also involved. We begin with the following energy estimate.
\begin{lm}[Energy estimate]\label{lm1} If $u$ is a weak solution of $(\ref{1})$ in $\Omega$, then for any $\eta \in C_{0}^{\infty}\left(B_{1}\right)$, we have\\
\begin{equation}\label{v1}
\int_{\Omega_1} \eta^{2}|\nabla[(u-\sup\limits_{\partial_w\Omega_1}g)^+]|^{2} \leq \int_{\Omega_1} \eta^{2} {f}^{2}+C \int_{\Omega_1}\left(|\nabla \eta|^{2}+\eta^{2}\right)|u|^{2}+C\|g\|_{L^{\infty}(\partial_w\Omega_1)}^{2},
\end{equation}
\begin{equation}\label{g1}
\int_{\Omega_1} \eta^{2}|\nabla[(u-\inf\limits_{\partial_w\Omega_1}g)^-]|^{2} \leq \int_{\Omega_1} \eta^{2} {f}^{2}+C \int_{\Omega_1}\left(|\
\nabla\eta|^{2}+\eta^{2}\right)|u|^{2}+C\|g\|_{L^{\infty}(\partial_w\Omega_1)}^{2}.
\end{equation}
\end{lm}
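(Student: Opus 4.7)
The plan is to treat the two estimates symmetrically and in parallel: apply part (ii) of Definition 1.3 to obtain the first inequality and part (iii) to obtain the second. I describe the argument for (\ref{v1}); the argument for (\ref{g1}) is identical after replacing $v$ by $w=(u-\inf_{\partial_w\Omega_1}g)^-$ and $f^+$ by $f^-$.

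Set $v=(u-\sup_{\partial_w\Omega_1}g)^+$, understood with its $0$-extension across $B_1\cap\Omega^c$. By part (ii) of Definition 1.3 applied with $x_0=0$ and $r=1$, $v\in H^1(B_1)$ and $v$ is a weak subsolution of $-\Delta v\leq f^+$ in $B_1$. First I would test this inequality against $\varphi=\eta^2 v$. Since $\eta\in C_0^\infty(B_1)$ and $v\in H^1(B_1)$, the product $\eta^2 v$ lies in $H^1_0(B_1)$ and is nonnegative; by density and the standard approximation of nonnegative $H^1_0$ functions by nonnegative smooth compactly supported ones, we may use it as a legitimate test function in (\ref{eq2}). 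This yields
\[
\int_{B_1}\eta^2|\nabla v|^2\,dx+2\int_{B_1}\eta\,v\,\nabla\eta\cdot\nabla v\,dx\leq\int_{B_1}f^+\eta^2 v\,dx.
\]

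Next I would absorb the cross term and the right-hand side via Cauchy--Schwarz and Young's inequality: using $2\eta v\,\nabla\eta\cdot\nabla v\geq -\tfrac12\eta^2|\nabla v|^2-2v^2|\nabla\eta|^2$ and $f^+\eta^2 v\leq \tfrac12\eta^2(f^+)^2+\tfrac12\eta^2 v^2$, I get
\[
\int_{B_1}\eta^2|\nabla v|^2\,dx\leq C\int_{B_1}\bigl(|\nabla\eta|^2+\eta^2\bigr)v^2\,dx+C\int_{B_1}\eta^2 f^2\,dx.
\]
Since $v$ is extended by $0$ outside $\Omega$, both integrals on the left and right reduce to integrals over $\Omega_1$. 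Finally, the pointwise bound $v\leq|u|+\|g\|_{L^\infty(\partial_w\Omega_1)}$ on $\Omega_1$ gives $v^2\leq 2|u|^2+2\|g\|_{L^\infty(\partial_w\Omega_1)}^2$, and absorbing the measure factor $|B_1|$ into the constant yields (\ref{v1}).

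The only nontrivial point to verify carefully is that $\eta^2 v$ is an admissible test function in the subsolution inequality (\ref{eq2}), which a priori is stated only for $\varphi\in C_0^\infty(B_1)$. The main obstacle is therefore the density argument: one must mollify $\eta^2 v$ while preserving nonnegativity and compact support in $B_1$, which is straightforward since $\mathrm{supp}(\eta)\Subset B_1$ guarantees the mollifications vanish near $\partial B_1$ for small parameter, and the truncation $(\,\cdot\,)^+$ is preserved under standard smoothing. Once this is in place, the computation above is routine Caccioppoli-type algebra, and applying the same scheme to $w=(u-\inf_{\partial_w\Omega_1}g)^-$ using (\ref{eq3}) delivers (\ref{g1}).
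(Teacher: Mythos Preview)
Your proposal is correct and follows essentially the same approach as the paper: test the subsolution inequality (\ref{eq2}) against $\eta^2(u-\sup_{\partial_w\Omega_1}g)^+$, absorb the cross term and the right-hand side by Cauchy's inequality, and use that the truncation vanishes outside $\Omega$ together with $v^2\le 2|u|^2+2\|g\|_{L^\infty}^2$ to reach (\ref{v1}). If anything, you are more careful than the paper in spelling out the density argument that makes $\eta^2 v$ an admissible test function.
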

\begin{proof}
    Take $\eta^2(u-\sup\limits_{\Omega_1}g)^{+}$ as the test function, we have
    $$\int_{B_1}\nabla [(u-\sup\limits_{\partial_w\Omega_1}g)^{+}]\cdot \nabla[\eta^2(u-\sup\limits_{\partial_w\Omega_1}g)^{+}]dx\leq\int_{B_1}\eta^2(u-\sup\limits_{\partial_w\Omega_1}g)^{+}f^+dx,$$
    i.e.,
    $$\int_{B_1}\eta^2|\nabla[(u-\sup\limits_{\partial_w\Omega_1}g)^{+}]|^2dx\leq\int_{B_1}\eta^2(u-\sup\limits_{\partial_w\Omega_1}g)^{+}f^+dx-2\int_{B_1}\eta \nabla\eta \cdot (u-\sup\limits_{\partial_w\Omega_1}g)^{+}
    \nabla [(u-\sup\limits_{\partial_w\Omega_1}g)^{+}].$$
    Then by using the Cauchy inequality, we obtain
    \begin{eqnarray*}
\int_{B_1}\eta^2|\nabla[(u-\sup\limits_{\partial_w\Omega_1}g)^{+}]|^2dx&\leq&\int_{B_1}\eta^2(u-\sup\limits_{\partial_w\Omega_1}g)^{+}f^+dx-2\int_{B_1}\eta \nabla\eta \cdot (u-\sup\limits_{\partial_w\Omega_1}g)^{+}
    \nabla [(u-\sup\limits_{\partial_w\Omega_1}g)^{+}]\\
    &\leq&\frac{1}{2}\int_{B_1}\eta^2(f^+)^2dx+\frac{1}{2}\int_{B_1}\eta^2[(u-\sup\limits_{\partial_w\Omega_1}g)^{+}]^2dx+\frac{1}{2}\int_{B_1}\eta^2|\nabla[(u-\sup\limits_{\partial_w\Omega_1}g)^{+}]|^2dx\\
&+& 2\int_{B_1}|\nabla\eta|^2[(u-\sup\limits_{\partial_w\Omega_1}g)^{+}]^2dx.
\end{eqnarray*}
Notice that $(u-\sup\limits_{\partial_w\Omega_1}g)^{+}$ vanishes outside $\Omega_1$ and $\int_{B_1}\eta^2(f^+))^2dx\leq\int_{\Omega_1}\eta^2{f}^2dx$, it follows that
$$\int_{\Omega_1} \eta^{2}|\nabla[(u-\sup\limits_{\partial_w\Omega_1}g)^+]|^{2} \leq \int_{\Omega_1} \eta^{2} {f}^{2}+C \int_{\Omega_1}\left(|\nabla \eta|^{2}+\eta^{2}\right)|u|^{2}+C\|g\|_{L^{\infty}(\partial_w\Omega_1)}^{2}.$$
The proof for the second inequality is similar.
\end{proof}

Next we would like to show the linear property of weak solutions with respect to the harmonic function, which is essential in the iterative scheme.
\begin{lm}{\label{lm11}} If $u(x)$ is a weak solution of $(\ref{1})$ in $\Omega$, $h(x)$ is harmonic in  $\R^n$, then $u-h$ is a weak solution in the sense of the definition $\ref{uni}$ to the following problem
\begin{equation*}
\left\{
\begin{array}{rcll}
-\Delta (u-h)&=&f\qquad&\text{in}~~\Omega,\\
u-h&=&g-h\qquad&\text{on}~~\partial\Omega. \\
\end{array}
\right.
\end{equation*}
\end{lm}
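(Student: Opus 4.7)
The plan is to verify the three items of Definition~\ref{uni} for $v := u - h$ with boundary data $g - h$ and source term $f$. Item (i) is immediate from linearity: since $h$ is harmonic on all of $\R^n$, for any $\varphi \in C_0^\infty(\Omega)$ integration by parts gives $\int_\Omega \nabla h\cdot\nabla\varphi\,dx = -\int_\Omega(\Delta h)\varphi\,dx = 0$, hence
\[
\int_\Omega \nabla v\cdot\nabla\varphi\,dx = \int_\Omega \nabla u\cdot\nabla\varphi\,dx - 0 = \int_\Omega f\varphi\,dx.
\]

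For item (ii), fix $x_0\in\partial\Omega$ and $r\in(0,\mathrm{diam}(\Omega)]$, and set $M := \sup_{B_r(x_0)\cap\partial\Omega}(g-h)$ and $\tilde M := \sup_{B_r(x_0)\cap\partial\Omega}g$. Writing $H := h + M$, the key observation is that $H$ is harmonic in $B_r(x_0)$ and $H\ge g$ pointwise on $B_r(x_0)\cap\partial\Omega$. The interior equation $-\Delta(u-H)=f$ in $\Omega$ (item (i) applied to $v$ together with $\Delta H = 0$) yields, via Kato's inequality, $-\Delta(u-H)^+ \le f\chi_{\{u>H\}} \le f^+$ weakly in $\Omega$. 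To promote this weak subsolution inequality from $\Omega$ to all of $B_r(x_0)$ against test functions $\varphi\in C_0^\infty(B_r(x_0))$, I need the $0$-extension of $(u-H)^+$ to $B_r(x_0)\cap\Omega^c$ to belong to $H^1(B_r(x_0))$. For this I use the pointwise subadditivity bound
\[
(u-H)^+ \le (u-\tilde M)^+ + (\tilde M - H)^+ \quad\text{on }\Omega\cap B_r(x_0),
\]
which follows from $u-H = (u-\tilde M)+(\tilde M - H)$ and $(a+b)^+\le a^+ + b^+$. The first summand's $0$-extension lies in $H^1(B_r(x_0))$ by item (ii) applied to $u$ itself, while $(\tilde M - H)^+$ is Lipschitz on $B_r(x_0)$ and vanishes at the boundary points where $g$ attains its supremum $\tilde M$ (there $H = h+M \ge g = \tilde M$). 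Combining this with the chain rule identification of the weak gradient of $(u-H)^+$ in $\Omega$ as $\chi_{\{u>H\}}(\nabla u - \nabla h)$, one identifies the $0$-extension as an element of $H^1(B_r(x_0))$ with weak gradient equal to this expression on $\Omega$ and zero on $\Omega^c$. Item (iii) follows by the symmetric argument with $M' := \inf_{B_r(x_0)\cap\partial\Omega}(g-h)$, the negative part $(u-h-M')^-$, and source $f^-$.

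The main obstacle is rigorously deducing $H^1$-membership of the $0$-extension of $(u-H)^+$ from the subadditivity bound: a pointwise domination by a function whose $0$-extension is in $H^1$ does not automatically transfer to $H^1$ membership. One must therefore not only use the subadditive bound but also the structural fact that $H$ is smooth and that $u$ attains $g$ in the strong sup/inf sense encoded in Definition~\ref{uni}; this is what allows the flux across $\partial\Omega\cap B_r(x_0)$ to be identified as zero so that the $0$-extension genuinely has a weak gradient on the whole ball.
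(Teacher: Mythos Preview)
Your treatment of item~(i) is correct and coincides with the paper's. The gap you yourself flag in item~(ii) is genuine and is not closed by what you write. The subadditivity bound $(u-H)^+ \le (u-\tilde M)^+ + (\tilde M - H)^+$ gives only pointwise domination, and $(\tilde M - H)^+ = (\tilde M - h - M)^+$ need not vanish on $\partial\Omega\cap B_r(x_0)$ (it vanishes only near points where $g-h$ is close to its supremum), so even your majorant does not have a zero extension in $H^1(B_r(x_0))$. The closing sentences about ``the structural fact that $H$ is smooth'' and ``identifying the flux across $\partial\Omega$ as zero'' are not a mechanism; they describe the desired conclusion rather than deduce it. Kato's inequality only gives the subsolution property against test functions supported in $\Omega$; the whole difficulty is crossing $\partial\Omega$, and for that you need the zero extension to be in $H^1$ first.

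The paper's device is different and is precisely what is missing from your sketch. One introduces an $\varepsilon$-shift and works with $(u-h-M-\varepsilon)^+$. Cover $\overline{\Omega_r}$ by an interior piece $U_0\Subset\Omega_r$ together with small balls $U_i=B_{r_i}(x_i)$, $x_i\in\partial_w\Omega_r$, chosen so that the oscillation of $h$ on each $U_i$ is at most $\varepsilon/2$. On each $U_i$ one checks the \emph{exact identity}
\[
\bigl[u-h-M-\varepsilon\bigr]^+ \;=\; \Bigl[\bigl(u-\textstyle\sup_{\partial_w\Omega_{r_i}}g\bigr)^+ + \sup_{\partial_w\Omega_{r_i}}g - h - M - \varepsilon\Bigr]^+,
\]
because the correction $\sup_{\partial_w\Omega_{r_i}}g - h - M \le \varepsilon$ forces the two truncations to coincide. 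The right-hand side is the positive part of an $H^1(B_{r_i})$ function (zero extension of $(u-\sup_{\partial_w\Omega_{r_i}}g)^+$, available from Definition~\ref{uni} for $u$, plus a smooth function), hence lies in $H^1(B_{r_i})$ and is a weak subsolution there by the standard truncation-of-subsolutions fact. Summing over a partition of unity subordinate to $\{U_i\}$ gives the $H^1$-membership and the subsolution inequality for $(u-h-M-\varepsilon)^+$ on all of $B_r(x_0)$; a uniform energy estimate then lets one send $\varepsilon\to 0$. The point your approach lacks is this local freezing of $h$ together with the $\varepsilon$-shift, which converts a mere inequality into an identity and thereby transfers the $H^1$ information exactly rather than only by domination.
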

\begin{proof} To show $u-h$ is a weak solution in the sense of the definition \ref{uni}, we would like to use the following classical result (also see Lemma 4.6 in \cite{HL2011}), that
if the function $u\in H^1_{loc}(B_r)$  satisfies that
$$
\int_{B_r} \nabla u \nabla \varphi dx\leq \int_{B_r} f^+\varphi dx
$$
for all $\varphi \in C^{\infty}_0(B_r)$ and $\varphi\geq 0$, then $(u-k)^+$ for any constant $k$ is also in $ H^1_{loc}(B_r)$ and  satisfies
\begin{equation}\label{338}
\int_{B_r} \nabla (u-k)^+\nabla \varphi dx\leq \int_{B_r} f^+\varphi dx
\end{equation}for all $\varphi \in C^{\infty}_0(B_r)$ and $\varphi\geq 0$.

Now we show $u-h$ is a weak solution in the sense of the definition \ref{uni}. Firstly, since $u(x)$ is a weak solution of $(\ref{1})$ in $\Omega$, by the definition \ref{uni} of weak solutions, we have
\begin{equation*}
\displaystyle\int_{\Omega}\nabla u\cdot\nabla \varphi dx=\displaystyle\int_{\Omega}f\cdot\varphi dx,\quad \forall~\varphi\in C_{0}^{\infty}(\Omega).
\end{equation*}
Due to $h(x)$ is harmonic, we have
\begin{equation}\label{333}
\displaystyle\int_{\Omega}\nabla (u-h)\cdot\nabla \varphi dx=\displaystyle\int_{\Omega}f\cdot\varphi dx,\quad \forall~\varphi\in C_{0}^{\infty}(\Omega).
\end{equation}

Next, we take any $ 0<r\leq diam(\Omega)$ and $x_0\in\partial\Omega$, and set $\Omega_r=B_r(x_0)\cap \Omega$. We want to show that  $[u-h-\sup\limits_{\partial_\omega \Omega_r}(g-h)]^+$'s 0-extension in $\Omega^c \cap B_r(x_0) $ is in $H^1(B_r(x_0))$. Without loss of generality, we still denote  $[u-h-\sup\limits_{\partial_\omega \Omega_r}(g-h)]^+$'s 0-extension in $\Omega^c \cap B_r(x_0)$  as  $[u-h-\sup\limits_{\partial_\omega \Omega_r}(g-h)]^+$.

 To this purpose, by the continuity of $h$, for any small $\varepsilon>0$, we choose the covering $\{U_i\}_{i=0}^{m}$ of $\Omega_r$, which $U_0\subset\subset \Omega_r$  is an domain, $U_i=B_{r_i}(x_i)$ with $x_i\in \partial_w \Omega_r$ and small $r_i>0$ for $i=1,2,...,m$ such that
$$
|h(x)-h(y)|\leq \frac {\varepsilon}{2},~~~ \forall x,y \in B_{r_i}(x_i).
$$
Let $\{\eta_i\}_{i=0}^{m}$ be a smooth partition of unity subordinate to the covering $\{U_i\}_{i=0}^{m}$, that is, suppose
\begin{eqnarray*}
\left\{
\begin{array}{rl}
0\leq \eta_i\leq 1\qquad&\eta_i\in C^{\infty}_0(U_i),\\
\sum\limits_{i=0}^{m}\eta_i=1\qquad&\text{in}~~\Omega_r.
\end{array}
\right.
\end{eqnarray*}
For any given test function $\varphi (x)\in C^1_0(B_r(x_0))$, since $\text { supp} \eta_0\subset\subset \Omega_r$, we know that
\begin{eqnarray}\label{3333}
&& \int_{B_r(x_0)}(u-h-\sup_{\partial_\omega\Omega_r}(g-h)-\varepsilon)^+D(\varphi\eta_0)dx\nonumber\\
&=& -\int_{\Omega_r\cap\{u>h+\sup\limits_{\partial_\omega\Omega_r}(g-h)+\varepsilon\}}\eta_0\varphi D(u-h-\sup_{\partial_\omega\Omega_r}(g-h)-\varepsilon)dx.
\end{eqnarray}
For $i=1,2,...,m$, we know $\text { supp} \eta_i\subset\subset U_i$. In this case, for any $x\in B_r(x_0)$,  if $[u(x)-h(x)-\sup\limits_{\partial_\omega\Omega_r}(g-h)-\varepsilon]^+>0$, it holds that $x\in \Omega_r$. Furthermore, when $x\in \Omega_{r_i}=B_{r_i}(x_i)\cap \Omega$ and $[u(x)-h(x)-\sup\limits_{\partial_\omega\Omega_r}(g-h)-\varepsilon]^+>0$, we have
$$[u(x)-h(x)-\sup\limits_{\partial_\omega\Omega_r}(g-h)-\varepsilon]^+
=[[u(x)-\sup\limits_{\partial_\omega\Omega_{r_i}}g]^+ +
\sup\limits_{\partial_\omega\Omega_{r_i}}g-h(x)-\sup\limits_{\partial_\omega\Omega_r}(g-h)-\varepsilon]^+,$$
since $\sup\limits_{\partial_\omega\Omega_{r_i}}g-h(x)-\sup\limits_{\partial_\omega\Omega_r}(g-h)\leq \varepsilon$ in $\Omega_{r_i}$, and
\begin{eqnarray*}
u(x) & > & h(x)+ \sup\limits_{\partial_\omega\Omega_r}(g-h)+\varepsilon\\
&\geq  & h(x)+ \sup\limits_{\partial_\omega\Omega_{r_i}}(g-h)+\varepsilon\\
&\geq & h(x)+ \sup\limits_{\partial_\omega\Omega_{r_i}}g-\sup\limits_{\partial_\omega\Omega_{r_i}}h+\varepsilon\\
&\geq & \sup\limits_{\partial_\omega\Omega_{r_i}}g-\frac{\varepsilon}2+\varepsilon\\
&\geq & \sup\limits_{\partial_\omega\Omega_{r_i}}g+\frac \varepsilon 2.
\end{eqnarray*}

On the other hand, when $x\in \Omega_{r_i}$ and $[[u(x)-\sup\limits_{\partial_\omega\Omega_{r_i}}g]^+ +
\sup\limits_{\partial_\omega\Omega_{r_i}}g-h(x)-\sup\limits_{\partial_\omega\Omega_r}(g-h)-\varepsilon]^+>0$, we also have
$$[u(x)-h(x)-\sup\limits_{\partial_\omega\Omega_r}(g-h)-\varepsilon]^+
=[[u(x)-\sup\limits_{\partial_\omega\Omega_{r_i}}g]^+ +
\sup\limits_{\partial_\omega\Omega_{r_i}}g-h(x)-\sup\limits_{\partial_\omega\Omega_r}(g-h)-\varepsilon]^+.$$
Thus we obtain that in each ball $B_{r_i}(x_i)$ it holds
\begin{equation}\label{337}[u(x)-h(x)-\sup\limits_{\partial_\omega\Omega_r}(g-h)-\varepsilon]^+
=[[u(x)-\sup\limits_{\partial_\omega\Omega_{r_i}}g]^+ +
\sup\limits_{\partial_\omega\Omega_{r_i}}g-h(x)-\sup\limits_{\partial_\omega\Omega_r}(g-h)-\varepsilon]^+.
\end{equation}
Hence since the zero extension in $\Omega^c\cap B_{r_i}(x_i)$ of $[u-\sup\limits_{\partial_\omega \Omega_{r_i}}g]^+$ is in $H^1(B_{r_i}(x_i))$,  we have $[[u(x)-\sup\limits_{\partial_\omega\Omega_{r_i}}g]^+ +
\sup\limits_{\partial_\omega\Omega_{r_i}}g-h(x)-\sup\limits_{\partial_\omega\Omega_r}(g-h)-\varepsilon]^+$ is in $H^1(B_{r_i}(x_i))$. Consequently by (\ref{337}) we have  the zero extension in $\Omega^c\cap B_{r_i}(x_i)$ of $ [u(x)-h(x)-\sup\limits_{\partial_\omega\Omega_r}(g-h)-\varepsilon]^+$ is in $H^1(B_{r_i}(x_i))$, and
\begin{eqnarray}\label{3334}
&& \int_{B_r(x_0)}(u-h-\sup_{\partial_\omega\Omega_r}(g-h)-\varepsilon)^+D(\varphi\eta_i)dx \nonumber\\
&=& -\int_{\Omega_r\cap\{u>h+\sup\limits_{\partial_\omega\Omega_r}(g-h)+\varepsilon\}}\eta_i\varphi D(u-h-\sup_{\partial_\omega\Omega_r}(g-h)-\varepsilon) dx
\end{eqnarray}
Putting (\ref{3333}) and (\ref{3334}) together, by using $\sum\limits_{i=0}^{m}\eta_i=1$, we obtain that  the zero extension in $\Omega^c\cap B_{r}(x_0)$ of $ [u(x)-h(x)-\sup\limits_{\partial_\omega\Omega_r}(g-h)-\varepsilon]^+$ is in $H^1(B_{r}(x_0))$, and
\begin{eqnarray}\label{3335}
&& \int_{B_r(x_0)}(u-h-\sup_{\partial_\omega\Omega_r}(g-h)-\varepsilon)^+D\varphi dx\nonumber\\
&=& - \int_{\Omega_r\cap\{u>h+\sup\limits_{\partial_\omega\Omega_r}(g-h)+\varepsilon\}}\varphi D(u-h-\sup_{\partial_\omega\Omega_r}(g-h)-\varepsilon)dx.
\end{eqnarray}

We also notice that, for above smooth partition of unity $\eta_i\in C^{\infty}_0(U_i)$ with $i=0,1,2,\cdots, m$, similarly as the proof of classical result (\ref{338}), we get from (\ref{333}) that
\begin{equation}\label{334}
\displaystyle\int_{B_r(x_0)}\nabla [u-h-\sup\limits_{\partial_\omega \Omega_r}(g-h)-\varepsilon]^+\cdot\nabla (\eta_0\varphi) dx\leq\displaystyle\int_{B_r(x_0)}f^+\eta_0\varphi dx,\quad \forall~\varphi\in C_{0}^{\infty}(B_r(x_0)), \varphi \geq 0.
\end{equation}
Since $u(x)$ is a weak solution of (\ref{1}), we have for each $i\in \{1,2,\cdots, m\}$,
\begin{equation*}
\displaystyle\int_{B_r(x_0)}\nabla [u-\sup\limits_{\partial_\omega \Omega_{r_i}}g]^+\cdot\nabla (\eta_i\varphi) dx\leq\displaystyle\int_{B_r(x_0)}f^+\eta_i\varphi dx,\quad \forall~\varphi\in C_{0}^{\infty}(B_r(x_0)), \varphi \geq 0.
\end{equation*}
Hence by using that $h$ is harmonic to get
\begin{equation*}
\displaystyle\int_{B_r(x_0)}\nabla ([u-\sup\limits_{\partial_\omega \Omega_{r_i}}g]^+-h(x))\cdot\nabla (\eta_i\varphi) dx\leq\displaystyle\int_{B_r(x_0)}f^+\eta_i\varphi dx,\quad \forall~\varphi\in C_{0}^{\infty}(B_r(x_0)), \varphi \geq 0.
\end{equation*}
Similarly as the proof of the classical result (\ref{338}) again, we obtain that
\begin{equation*}
\displaystyle\int_{B_r(x_0)}\nabla [[u(x)-\sup\limits_{\partial_\omega\Omega_{r_i}}g]^+ +
\sup\limits_{\partial_\omega\Omega_{r_i}}g-h(x)-\sup\limits_{\partial_\omega\Omega_r}(g-h)-\varepsilon]^+\cdot\nabla (\eta_i\varphi) dx\leq\displaystyle\int_{B_r(x_0)}f^+\eta_i\varphi dx,
\end{equation*}
for any $\varphi\in C_{0}^{\infty}(B_r(x_0))$ and $ \varphi \geq 0$. Thus we have from (\ref{337})
\begin{eqnarray}\label{335}
& & \displaystyle\int_{B_r(x_0)}\nabla [u-h-\sup\limits_{\partial_\omega \Omega_r}(g-h)-\varepsilon]^+\cdot\nabla (\eta_i\varphi) dx\nonumber\\
&=& \displaystyle\int_{B_r(x_0)}\nabla [[u(x)-\sup\limits_{\partial_\omega\Omega_{r_i}}g]^+ +
\sup\limits_{\partial_\omega\Omega_{r_i}}g-h(x)-\sup\limits_{\partial_\omega\Omega_r}(g-h)-\varepsilon]^+\cdot\nabla (\eta_i\varphi) dx\nonumber\\
&\leq & \displaystyle\int_{B_r(x_0)}f^+\eta_i\varphi dx
\end{eqnarray}
for any $\varphi\in C_0^{\infty}(B_r(x_0))$ with $\varphi\geq 0$.  Putting (\ref{334}) and (\ref{335}) together, by using $\sum\limits_{i=0}^{m}\eta_i=1$ again, it follows that
 \begin{equation}\label{3336}
\displaystyle\int_{B_r(x_0)}\nabla [u-h-\sup\limits_{\partial_\omega \Omega_r}(g-h)-\varepsilon]^+\cdot\nabla \varphi dx\leq\displaystyle\int_{B_r(x_0)}f^+ \varphi dx,\quad \forall~\varphi\in C_{0}^{\infty}(B_r(x_0)), \varphi \geq 0.
\end{equation}
Since we have proved that the zero extension in $\Omega^c\cap B_{r}(x_0)$ of $ [u(x)-h(x)-\sup\limits_{\partial_\omega\Omega_r}(g-h)-\varepsilon]^+$ is in $H^1(B_{r}(x_0))$, we can take $\varphi=\eta[u(x)-h(x)-\sup\limits_{\partial_\omega\Omega_r}(g-h)-\varepsilon]^+$ in (\ref{3336}) for some cut off function $\eta$, and know from the energy inequality that  the zero extension in $\Omega^c\cap B_{r}(x_0)$ of $ [u(x)-h(x)-\sup\limits_{\partial_\omega\Omega_r}(g-h)-\varepsilon]^+$ is uniformly bounded  in $H^1_{loc}(B_{r}(x_0))$.  Without loss of generality, we assume that  the zero extension in $\Omega^c\cap B_{r}(x_0)$ of $ [u(x)-h(x)-\sup\limits_{\partial_\omega\Omega_r}(g-h)-\varepsilon]^+$ is uniformly bounded  in $H^1(B_{r}(x_0))$. Otherwise, we can discuss the problem on a small ball $B_{r-\delta}(x_0)$.
Thus, we have  the zero extension in $\Omega^c\cap B_{r}(x_0)$ of $
[u(x)-h(x)-\sup\limits_{\partial_\omega\Omega_r}(g-h)-\varepsilon]^+ $ converges weakly to  $
[u(x)-h(x)-\sup\limits_{\partial_\omega\Omega_r}(g-h)]^+ $ in $H^1(B_{r}(x_0)) $ up to the subsequence as $\varepsilon\to 0^+$. Hence  the zero extension in $\Omega^c\cap B_{r}(x_0)$ of $[u(x)-h(x)-\sup\limits_{\partial_\omega\Omega_r}(g-h)]^+ $ in $H^1(B_{r}(x_0)) $. Furthermore, taking $\varepsilon \to 0^+$ in (\ref{3335}) and (\ref{3336}) up to the subsequence, we obtain
$$
\int_{B_r(x_0)}(u-h-\sup_{\partial_\omega\Omega_r}(g-h))^+D\varphi dx=-
\int_{\Omega_r\cap\{u>h+\sup\limits_{\partial_\omega\Omega_r}(g-h)\}}\varphi D(u-h-\sup_{\partial_\omega\Omega_r}(g-h))dx.
$$
and
 \begin{eqnarray}\label{336}
 \displaystyle\int_{B_r(x_0)}\nabla [u-h-\sup\limits_{\partial_\omega \Omega_r}(g-h)]^+\cdot\nabla \varphi dx\leq  \displaystyle\int_{B_r(x_0)}f^+\varphi dx, \quad \forall~\varphi\in C_{0}^{\infty}(B_r(x_0)) \text { and } \varphi\geq 0.
\end{eqnarray}

Similarly, we also can show that  the zero extension in $\Omega^c\cap B_r(x_0)$ of $[u-h-\inf\limits_{\partial_\omega \Omega_r}(g-h)]^-$ is in $H^1(B_r(x_0))$, and
\begin{eqnarray*}
 \displaystyle\int_{B_r(x_0)}\nabla [u-h-\inf\limits_{\partial_\omega \Omega_r}(g-h)]^-\cdot\nabla \varphi dx\leq  \displaystyle\int_{B_r(x_0)}f^-\varphi dx
\end{eqnarray*} for any $\varphi\in C_0^{\infty}(B_r(x_0))$ with $\varphi\geq 0$. Thus by the definition \ref{uni} we know that then $u-h$ is a weak solution of
\begin{equation*}
\left\{
\begin{array}{rcll}
-\Delta (u-h)&=&f\qquad&\text{in}~~\Omega,\\
u-h&=&g-h\qquad&\text{on}~~\partial\Omega. \\
\end{array}
\right.
\end{equation*}

\end{proof}

Next we show the following approximation lemma by the compactness method.
\begin{lm}[Compactness Lemma]\label{lm3}
  For any $\varepsilon>0$, there exists a small $\delta=\delta(\varepsilon)>0$ such that for any weak solution of $(\ref{1})$ in $\Omega$ with
  $$\displaystyle\frac{1}{|B_{1}|}\displaystyle\int_{\Omega_{1}}u^{2}dx\leq 1,~~ \frac 1{|B_1|}\int_{\Omega_{1}}f^{2}dx\leq \delta^2, ~~ \|g\|_{L^{\infty}(\partial_w\Omega_1)}\leq \delta,$$  there exists a $L$-uniform and $\theta$-admissible domain $\tilde{\Omega}$ with $|\Omega_{\frac 12}\Delta \tilde{\Omega}_{\frac 12}|\leq \varepsilon$ and $0\in \partial \tilde {\Omega}$, and a function $h(x)$ defined in $\tilde{\Omega}_\frac 12$, which is a solution of
\begin{eqnarray*}
\left\{
\begin{array}{rcll}
-\Delta h&=&0\qquad&\text{in}~~\tilde{\Omega}_{\frac 12},\\
h&=&0\qquad&\text{on}~~\partial_{w}\tilde {\Omega}_{\frac 12}, \\
\end{array}
\right.
\end{eqnarray*}
with
$$\int_{\tilde{\Omega}_{\frac 12}}|h|^{2}dx\leq2 |B_1|,
$$
such that
$$\int_{\tilde{\Omega}\cap \Omega_{\frac{1}{2}}}|u-h|^{2}dx\leq\varepsilon^{2}.
$$
\end{lm}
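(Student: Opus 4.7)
\textbf{Proof proposal for Lemma \ref{lm3} (Compactness Lemma).}

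My plan is the standard contradiction-and-compactness argument, using Proposition \ref{lm2} for the domains, Lemma \ref{lm1} for the energy, and the structure of Definition \ref{uni} for transferring the zero boundary data to the limit. Suppose the conclusion fails: there exist $\varepsilon_0 > 0$, a sequence $\delta_k \downarrow 0$, a sequence of $L$-uniform and $\theta$-admissible domains $\{\Omega_k\} \subset B_2$ with $0 \in \partial \Omega_k$ and $\mathrm{diam}(\Omega_k) \geq C_0$, and weak solutions $u_k$ of $(\ref{1})$ on $\Omega_k$ with data $f_k,g_k$ satisfying the three smallness hypotheses with $\delta = \delta_k$, yet no admissible $\tilde\Omega$ and no admissible harmonic $h$ realizing the conclusion with tolerance $\varepsilon_0$.

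By Proposition \ref{lm2} I extract a subsequence (still indexed by $k$) along which $\Omega_k \to \tilde\Omega$ in the Hausdorff distance with $\tilde\Omega$ also $L$-uniform, $\theta$-admissible and $0 \in \partial\tilde\Omega$; in particular $|\Omega_{k,1/2}\Delta\tilde\Omega_{1/2}| \to 0$. Applying Lemma \ref{lm1} with a cutoff $\eta \in C_0^\infty(B_1)$, $\eta \equiv 1$ on $B_{3/4}$, to both $(u_k-\sup_{\partial_w\Omega_{k,1}}g_k)^+$ and $(u_k-\inf_{\partial_w\Omega_{k,1}}g_k)^-$, I obtain a uniform $H^1(B_{3/4})$ bound on the zero-extensions $v_k^+ := (u_k-\sup g_k)^+$ and $v_k^- := (u_k-\inf g_k)^-$ (extended by $0$ outside $\Omega_k$). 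The decomposition $u_k = v_k^+ - v_k^- + m_k$, with $|m_k|\leq \|g_k\|_{L^\infty} \leq \delta_k$ inside $\Omega_k$, then yields weak $H^1_{loc}$ and strong $L^2_{loc}(B_{3/4})$ subsequential convergence of $v_k^\pm$ to some $w^\pm \in H^1(B_{3/4})$, and $u_k \to h := w^+ - w^-$ in $L^2(\Omega_{k,1/2} \cup \tilde\Omega_{1/2})$ in the natural sense after zero-extension.

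Hausdorff inclusion (I) of Proposition \ref{lm2} forces $v_k^\pm = 0$ outside any $\varepsilon$-neighborhood of $\tilde\Omega$ for $k$ large, so $w^\pm = 0$ a.e.\ in $B_{1/2}\setminus\overline{\tilde\Omega}$; hence $h^\pm = w^\pm$ and both zero extensions lie in $H^1(B_{1/2})$. To show $h$ solves the limit problem in the sense of Definition \ref{uni}, I pick any $\varphi \in C_0^\infty(\tilde\Omega_{1/2})$; by (III), $\mathrm{supp}\,\varphi \subset \Omega_k$ eventually, so $\int \nabla u_k\cdot\nabla\varphi = \int f_k\varphi \to 0$, and the weak $H^1$ limit gives $-\Delta h = 0$ in $\tilde\Omega_{1/2}$. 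For the boundary condition, I pass to the limit in $(\ref{eq2})$–$(\ref{eq3})$ written for $u_k$ with constants $\sup g_k,\inf g_k \to 0$: since $f_k^\pm \to 0$ in $L^2$ and $v_k^\pm$ converges weakly in $H^1(B_{1/2})$, the limits $h^\pm = w^\pm$ are nonnegative subsolutions of $\Delta$ in $B_{1/2}$ with zero extension, which is exactly the $g = 0$ case of Definition \ref{uni}. The $L^2$ bound $\int_{\tilde\Omega_{1/2}} h^2 \leq 2|B_1|$ is immediate from the hypothesis $\int_{\Omega_{k,1}}u_k^2 \leq |B_1|$ and lower semicontinuity.

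Finally, for $k$ large, Hausdorff convergence gives $|\Omega_{k,1/2}\Delta\tilde\Omega_{1/2}| \leq \varepsilon_0$, and $L^2$ convergence gives $\int_{\tilde\Omega\cap\Omega_{k,1/2}}|u_k-h|^2 \leq \int_{B_{1/2}}|u_k-h|^2 \leq \varepsilon_0^2$, contradicting the assumption. The main obstacle I anticipate is Step 3, namely verifying that the limiting $h$ indeed satisfies the zero boundary condition in the weak sense of Definition \ref{uni}: one must justify that the Hausdorff-measure decay of $(\partial\Omega_k)^\varepsilon$ from Remark \ref{HD} together with $|\Omega_k \Delta \tilde\Omega|\to 0$ lets the zero extensions from $\Omega_k^c$ transfer to zero extensions from $\tilde\Omega^c$, and that the subsolution inequalities survive the weak limit; everything else (domain compactness, energy bound, harmonicity inside) is routine given the earlier lemmas.
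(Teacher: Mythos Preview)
Your proposal is correct and follows essentially the same contradiction-and-compactness strategy as the paper: Proposition~\ref{lm2} for domain compactness, Lemma~\ref{lm1} for uniform $H^1(B_{3/4})$ bounds on the truncations $(u_k-\sup g_k)^+$ and $(u_k-\inf g_k)^-$, strong $L^2$ compactness to produce the limit $h=w^+-w^-$, interior harmonicity from testing against $\varphi\in C_0^\infty(\tilde\Omega_{1/2})$, and the contradiction. The paper is slightly more casual about the boundary condition---it simply observes that the truncation limits vanish in $\tilde\Omega^c$ and declares $u_\infty$ a weak solution---whereas you go a step further in passing the subsolution inequalities (ii)--(iii) of Definition~\ref{uni} to the weak limit; this extra care is harmless and arguably more complete.
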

\begin{proof}
We prove it by contradiction. If not, there is an $\varepsilon_0 >0$ so that for each integer k, there is a weak solution of
\begin{eqnarray*}
\left\{
\begin{array}{rcll}
-\Delta u_k&=&f_k\qquad&\text{in}~~\Omega_{k},\\
u_k&=&g_k\qquad&\text{on}~~\partial\Omega_{k}, \\
\end{array}
\right.
\end{eqnarray*}
with
$$\displaystyle\frac{1}{|B_{1}|}\displaystyle\int_{\Omega_{k}\cap B_1}u_{k}^{2}dx\leq 1, \frac {1}{|B_1|}\int_{\Omega_{k}\cap B_1}f_{k}^{2}dx\leq \frac{1}{k^2},  \|g_k\|_{L^{\infty}(\partial\Omega_k\cap B_1)}\leq \frac{1}{k},$$
and $\Omega_k$ is a sequence of $L$-uniform and $\theta$-admissible domains with $0\in \partial \Omega_k $ and $C\geq diam(\Omega_k)\geq 1$,
such that  for any bounded $L$-uniform and $\theta$-admissible domains  $\tilde{\Omega}$ with $|(\Omega_k\cap B_{\frac 12})\Delta \tilde{\Omega}_{\frac 12}|\leq \varepsilon_0$ and $0\in \partial \tilde {\Omega}$, and for any function $h$  which is  the solution of
\begin{eqnarray*}
\left\{
\begin{array}{rcll}
-\Delta h&=&0\qquad&\text{in}~~\tilde{\Omega}_{\frac 12},\\
h&=&0\qquad&\text{on}~~\partial_{\omega}\tilde{\Omega}_{\frac 12}, \\
\end{array}
\right.
\end{eqnarray*}
with $$\int_{\tilde{\Omega}_{\frac 12}}|h|^{2}dx\leq2 |B_1|,
$$
the following inequality holds,
\begin{equation}\label{a}\int_{\tilde{\Omega}\cap\Omega_{k}\cap B_{\frac{1}{2}}}|u_{k}-h|^{2}dx\geq\varepsilon_{0}^{2}.
\end{equation}
By Proposition $\ref{lm2}$, we obtain that $\{\Omega_k\}$ has a subsequence , which we still denoted as $\Omega_k$,  that converges to $\Omega_{\infty}$ in the Hausdorff distance. Here $\Omega_\infty$ is a bounded $L$-uniform and $\theta$-admissible domain with $diam(\Omega_\infty)\geq 1$,  $0\in \partial \Omega_{\infty}$. Furthermore, there exists an positive integer $K=K(\varepsilon_0)$ such that for all $k>K$ it holds $|(\Omega_{\infty}\Delta\Omega_{k})\cap B_{\frac{1}{2}}|\leq \varepsilon_0$.

From lemma $\ref{lm1}$ we observe that $$\|(u_k-\sup\limits_{\partial\Omega_k\cap B_1}g_k)^{+}\|_{H^1(B_{\frac{3}{4}})}\leq C,$$
$$\|(u_k-\inf\limits_{\partial\Omega_k\cap B_1}g_k)^{-}\|_{H^1(B_{\frac{3}{4}})}\leq C.$$
Consequently, using the diagonal subsequence technique, it is easy to see that $(u_k-\sup\limits_{\partial\Omega_k\cap B_1}g_k)^{+}$ [$(u_k-\inf\limits_{\partial\Omega_k\cap B_1}g_k)^{-}$, resp.] has a subsequence, which we still denoted as $(u_k-\sup\limits_{\partial\Omega_k\cap B_1}g_k)^{+}$ [$(u_k-\inf\limits_{\partial\Omega_k\cap B_1}g_k)^{-}$, resp.] that the weak limit $v_1$[$v_2$, resp.] $\in H^{1}(B_{\frac{3}{4}})$ and strongly converges to $v_1$[$v_2$,  resp.] in $L^{2}(B_{\frac{3}{4}})$.  However, we can only  deduce that $u_k$ weakly converges to $u_{\infty}$ in $L^{2}(B_{\frac{3}{4}})$. Our aim is to prove that $u_k$ converges to $u_{\infty}$ strongly in $L^{2}(B_{\frac{3}{4}})$ and $u_{\infty}^{+}=v_1 $, $u_{\infty}^{-}=v_2 $ a.e.. Then we have $u_\infty=v_1-v_2$ a.e..

We write $u_k$ as $u_k^{+}-u_k^{-}$, and let $u_k=0$ in $((\Omega_k)^c)^o$. Notice that $(u_k-\sup\limits_{\partial\Omega_k\cap B_1}g_k)^{+}$ is a Cauchy sequence in $L^{2}(B_{\frac{3}{4}})$, so is $u_{k}^{+}$. In fact, we have
\begin{eqnarray*}\|u^{+}_{k+t}-u^{+}_{k}\|_{L^{2}(B_{\frac{3}{4}})}
&\leq& \|u^{+}_{k+t}-(u_{k+t}-\sup\limits_{\partial\Omega_{k+t}\cap B_1}g_{k+t})^{+}\|_{L^{2}(B_{\frac{3}{4}})}\\
& & +\|(u_{k+t}-\sup\limits_{\partial\Omega_{k+t}\cap B_{1}}g_{k+t})^{+}-(u_{k}-\sup\limits_{\partial\Omega_k\cap B_1}g_{k})^{+}\|_{L^{2}(B_{\frac{3}{4}})}
\\
& & +\|(u_{k}-\sup\limits_{\partial\Omega_k\cap B_1}g_{k})^{+}-u^{+}_{k}\|_{L^{2}(B_{\frac{3}{4}})} \\
&\rightarrow& 0\quad \text{as} \quad k,t\rightarrow \infty
\end{eqnarray*}
From above we can assume that $u_k^+$ converges to $u_{\infty}^+$ strongly in $L^{2}(B_{\frac{3}{4}})$. Since
$$|u^{+}_{\infty}-v_1|
\leq |u^{+}_{\infty}-u^{+}_{k}|+| u^+_{k}- (u_k-\sup\limits_{\partial\Omega_{k}\cap B_1}g_{k})^{+}|+| (u_k-\sup\limits_{\partial\Omega_{k}\cap B_1}g_{k})^{+}-v_1|,
$$
We have $u_{\infty}^{+}=v_1 $ a.e.. Similarly we have
$u_k^-$ converges to $u_{\infty}^-$ strongly in $L^{2}(B_{\frac{3}{4}})$ and $u_{\infty}^{-}=v_2 $ a.e. .

By definition, $v_1$, $v_2$ vanishes outside $\Omega_\infty^c\cap B_{\frac{3}{4}}$. So $u_{\infty}=0$ on $\partial\Omega_{\infty}\cap B_{\frac{1}{2}}$. While for $\forall x_0\in \Omega_{\infty}\cap B_{\frac{1}{2}}$, there exists an integer $k_0$, such that $x_0\in \Omega_k$ for all $k\geq k_0$  and $B_{\varepsilon}(x_0)\subset \Omega_k\cap B_{\frac 12}$ for some $\varepsilon=\varepsilon(d(x_0,\partial \Omega_\infty ))>0$. Then for any $\eta \in C_{0}^{\infty}(B_{\varepsilon_{0}}(x_0))$, since that $u_k$ is a weak solution in $\Omega_k$, we have
$$\int_{B_{\varepsilon_{0}}(x_0)}f_k\eta dx=\int_{B_{\varepsilon_{0}}(x_0)} \nabla u_k\cdot \nabla\eta dx.$$
By  the energy estimate, we know $Du_k$ converges weakly to $Du_\infty$ in $H^1_{loc}(\Omega_\infty \cap B_{\frac 12})$. Hence, if letting $k\rightarrow\infty$, we have $u_{\infty}$ is a weak solution of
\begin{eqnarray*}
\left\{
\begin{array}{rcll}
-\Delta h&=&0\qquad&\text{in}~~\Omega_{\infty}\cap B_{\frac{1}{2}},\\
h&=&0\qquad&\text{on}~~\partial\Omega_{\infty}\cap B_{\frac{1}{2}}. \\
\end{array}
\right.
\end{eqnarray*}
It is clear that
$$\int_{\Omega_{\infty}\cap B_{\frac 12}}|u_\infty|^{2}dx\leq2 |B_1|.
$$
Thus we yield a harmonic function $u_{\infty}$ in $\Omega_{\infty}\cap B_{\frac{1}{2}}$, which is contradict to (\ref{a}) if we take $\tilde {\Omega}=\Omega_\infty$ and $h=u_\infty$.
\end{proof}

In the following we give a key lemma, which will be used repeatedly later.
\begin{lm}[Key Lemma]\label{key2} There exists $0<\lambda<1,~\delta_{0}>0,~0<\alpha_0<1$ such that for any weak solution of $(\ref{1})$
with
$$\displaystyle{\frac{1}{|B_1|}}\int_{\Omega_{1}}u^{2}dx\leq 1, ~\frac 1{|B_1|}\int_{\Omega_{1}}f^{2}dx\leq \delta_{0}^2,~\|g\|_{L^{\infty}(\partial_w\Omega_1)}\leq \delta_0,$$
it follows that for any $0<\alpha <\alpha_0$
$$\left(\frac{1}{|B_{\lambda}|}\int_{\Omega_{\lambda}}|u|^{2}dx\right)^{\frac{1}{2}}\leq\lambda^{\alpha}.
$$
\end{lm}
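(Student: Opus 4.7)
My plan is the Caffarelli compactness-perturbation iteration, combining Lemma \ref{lm3} with the classical boundary Hölder regularity for harmonic functions on $\theta$-admissible domains. First, the Stampacchia-type boundary estimate yields exponents $\alpha_0=\alpha_0(n,\theta)\in(0,1)$ and $C_*=C_*(n,\theta)$ such that every harmonic $h$ on $\tilde{\Omega}_{1/2}$ vanishing on $\partial_w\tilde{\Omega}_{1/2}$ (with $0\in\partial\tilde{\Omega}$, $\tilde{\Omega}$ being $\theta$-admissible and $L$-uniform) and with $\int_{\tilde{\Omega}_{1/2}}h^2\,dx\le 2|B_1|$ satisfies
\[
\Bigl(|B_\rho|^{-1}\!\int_{\tilde{\Omega}_\rho}h^2\,dx\Bigr)^{1/2}\le C_*\rho^{\alpha_0},\qquad 0<\rho\le \tfrac14.
\]
Fix $\alpha\in(0,\alpha_0)$ and choose $\lambda\in(0,1/4)$ small enough that $2C_*^2\lambda^{2\alpha_0}\le\tfrac12\lambda^{2\alpha}$.

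With $\lambda$ fixed, I apply Lemma \ref{lm3} with an auxiliary parameter $\varepsilon>0$ to be determined: for every $u$ satisfying the Key Lemma hypotheses with $\delta_0=\delta(\varepsilon)$, this produces a $\theta$-admissible $L$-uniform $\tilde{\Omega}$ with $|\Omega_{1/2}\Delta\tilde{\Omega}_{1/2}|\le\varepsilon$ and a harmonic $h$ as above with $\int_{\tilde{\Omega}\cap\Omega_{1/2}}|u-h|^2\,dx\le\varepsilon^2$. Splitting the target integral,
\[
\int_{\Omega_\lambda}u^2\,dx\le 2\!\int_{\tilde{\Omega}\cap\Omega_\lambda}\!|u-h|^2\,dx+2\!\int_{\tilde{\Omega}_\lambda}\!h^2\,dx+\!\int_{\Omega_\lambda\setminus\tilde{\Omega}}\!u^2\,dx,
\]
the first two terms are controlled by $2\varepsilon^2$ and by $2C_*^2\lambda^{2\alpha_0}|B_\lambda|$ respectively. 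For the third term I invoke a local $L^\infty$ bound $\|u\|_{L^\infty(\Omega_{3/4})}\le M=M(n,\theta)$ obtained by Moser iteration applied to the subsolution inequalities (\ref{eq2})--(\ref{eq3}) of Definition \ref{uni}: their zero-extensions are genuine subsolutions on full balls, so the standard De Giorgi–Moser scheme applies with $\theta$-admissibility furnishing the required Poincaré inequality. Consequently the third term is at most $M^2\varepsilon$.

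Dividing by $|B_\lambda|$ gives
\[
\frac{1}{|B_\lambda|}\int_{\Omega_\lambda}u^2\,dx\le \frac{2\varepsilon^2+M^2\varepsilon}{|B_\lambda|}+2C_*^2\lambda^{2\alpha_0}.
\]
Choosing $\varepsilon>0$ (and thus $\delta_0=\delta(\varepsilon)$) so small that $(2\varepsilon^2+M^2\varepsilon)/|B_\lambda|\le\tfrac12\lambda^{2\alpha}$, combined with the choice of $\lambda$, yields the claimed inequality. The main obstacle I anticipate is step one: verifying the uniform $C^{\alpha_0}$ decay of $h$ at the origin with constants depending only on $n,\theta$, so that $\alpha_0$ and $\lambda$ can be fixed \emph{before} invoking the compactness lemma. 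This rests on iterating a Caccioppoli-plus-Poincaré estimate on admissible dyadic balls; it is standard in spirit but requires care because $\tilde{\Omega}$ is only $\theta$-admissible, not Lipschitz. A secondary point is to confirm that Moser iteration genuinely applies to the weak-solution class of Definition \ref{uni}, but this is essentially built into the definition via (\ref{eq2})--(\ref{eq3}).
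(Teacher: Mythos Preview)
Your overall architecture matches the paper's: invoke the compactness lemma, use boundary H\"older decay of the approximating harmonic function $h$ on the $\theta$-admissible limit domain, split $\int_{\Omega_\lambda}u^2$ into a piece on $\tilde\Omega\cap\Omega_\lambda$ (controlled by $|u-h|$ and $|h|$) and a piece on the small set $\Omega_\lambda\setminus\tilde\Omega$, then choose $\lambda$ and $\varepsilon$ in that order. The first step you flag as the ``main obstacle'' is fine: it is exactly the classical De~Giorgi boundary estimate on admissible domains, and the paper invokes it the same way.

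The genuine gap is your treatment of $\int_{\Omega_\lambda\setminus\tilde\Omega}u^2$. You claim a uniform bound $\|u\|_{L^\infty(\Omega_{3/4})}\le M(n,\theta)$ via Moser iteration on the zero-extended subsolutions $(u-\sup g)^+$, $(u-\inf g)^-$. But the right-hand side in \eqref{eq2}--\eqref{eq3} is $f^\pm$, and the Key Lemma only assumes $f\in L^2(\Omega_1)$. The De~Giorgi--Moser $L^\infty$ bound for subsolutions of $-\Delta w\le f$ requires $f\in L^q$ with $q>n/2$; for $n\ge 4$ (and the paper works in all $n\ge 3$) the hypothesis $f\in L^2$ is insufficient, and in fact solutions of $-\Delta u=f$ with $f\in L^2$ need not be locally bounded. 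So the bound $M^2\varepsilon$ is unavailable in general. The paper avoids this by stopping the embedding at the Sobolev exponent: from the energy estimate (Lemma~\ref{lm1}) one has $(u-\sup g)^+,(u-\inf g)^-\in H^1(B_{1/2})$ with norm $\le C$, hence in $L^{2^*}(B_{1/2})$, and then H\"older gives
\[
\int_{\Omega_\lambda\setminus\tilde\Omega}[(u-\sup g)^+]^2\,dx
\le |\Omega_\lambda\setminus\tilde\Omega|^{2/n}\,\|(u-\sup g)^+\|_{L^{2^*}}^2
\le C\varepsilon^{2/n},
\]
and similarly for the negative part, while on the set $\{|u|\le\delta_0\}$ the integrand is trivially $\le\delta_0^2$. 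This yields $\tfrac{1}{|B_\lambda|}\int_{\Omega_\lambda\setminus\tilde\Omega}u^2\le C\varepsilon^{2/n}/|B_\lambda|+5\delta_0^2$, which is all that is needed. Replacing your $L^\infty$ step with this $L^{2^*}$--H\"older argument repairs the proof and makes it dimension-independent.
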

\begin{proof}
Let $\tilde{\Omega}$ and $h$ be the domain and the function of the previous lemma which satisfies that  $|\Omega_{\frac 12}\Delta \tilde{\Omega}_{\frac 12}|\leq \varepsilon$, $0\in \partial \tilde {\Omega}$,
$$\int_{\tilde{\Omega}_{\frac 12}}|h|^{2}dx\leq2|B_1|,
$$
and
$$
\int_{\tilde{\Omega}\cap\Omega_{\frac{1}{2}}}|u-h|^{2}dx\leq \varepsilon^2
$$
for some $\varepsilon<1$ to be determined. By the property of harmonic functions, it follows that
$$\|h\|_{L^{\infty}(B_{\frac{1}{4}}\cap\tilde{\Omega})}\leq C\|h\|_{L^{2}(B_{\frac{1}{2}}\cap\tilde{\Omega})}\leq C.
$$
Since $\tilde{\Omega}$ is $\theta$-admissible, there exist constants $0<r_{0}\leq\displaystyle\frac{1}{4}$, $0<\alpha_{0}<1$ and $C_{0}=C_{0}(n)$ such that for any $x\in B_{r_{0}}\cap\tilde{\Omega}$,
$$|h(x)|=|h(x)-h(0)|\leq C\frac{|x|^{\alpha_0}}{r_{0}^{\alpha_0}}\|h\|_{L^{\infty}(B_{\frac{1}{4}}\cap\Omega)}\leq C_{0}\frac{|x|^{\alpha_0}}{r_{0}^{\alpha_0}}.
$$
Notice that for each $0<\lambda<r_0$ and for small $\delta_0>0$ to be determined
\begin{eqnarray*}
\int_{\Omega_{\lambda}\backslash \tilde{\Omega}}|u|^{2}dx
&=&\int_{(\Omega_{\lambda}\backslash \tilde{\Omega})\cap\{u>\delta_0\}}|u|^{2}dx+\int_{(\Omega_{\lambda}\backslash \tilde{\Omega})\cap\{-\delta_0<u<\delta_0\}}|u|^{2}dx+\int_{(\Omega_{\lambda}\backslash \tilde{\Omega})\cap\{u<-\delta_0\}}|u|^{2}dx\\
&\leq & \int_{(\Omega_{\lambda}\backslash \tilde{\Omega})\cap\{u>\sup\limits_{\partial_w\Omega_1}g\}}|u|^{2}dx+\int_{(\Omega_{\lambda}\backslash \tilde{\Omega})\cap\{-\delta_0<u<\delta_0\}}|u|^{2}dx+\int_{(\Omega_{\lambda}\backslash \tilde{\Omega})\cap\{u<\inf\limits_{\partial_w\Omega_1}g\}}|u|^{2}dx
\end{eqnarray*}
By the definition of weak solutions, we know that  $(u-\sup\limits_{\partial_w\Omega_1}g)^{+}$ and $(u-\inf\limits_{\partial_w\Omega_1}g)^{-}$ are in $ H^{1}(B_1)$ after they take $0$-extensions in $\Omega^c\cap B_1$. Applying H\"older inequality and Sobolev embedding to $(u-\sup\limits_{\partial_w\Omega_1}g)^+$, and using the energy estimate, we have
\begin{eqnarray*}
& & \frac 1{|B_\lambda|}\int_{(\Omega_{\lambda}\backslash \tilde{\Omega})\cap\{u>\sup\limits_{\partial_w\Omega_1}g\}}|u|^{2}dx\\
&\leq &\frac 2{|B_\lambda|}\int_{(\Omega_{\lambda}\backslash \tilde{\Omega})\cap\{u>\sup\limits_{\partial_w\Omega_1}g\}}|(u-\sup\limits_{\partial_w\Omega_1}g)^{+}|^{2}dx+\frac 2{|B_\lambda|}\int_{(\Omega_{\lambda}\backslash \tilde{\Omega})\cap\{u>\sup\limits_{\partial_w\Omega_1}g\}}\delta_0^{2}dx
\\
&\leq& \frac {2|\Omega_{\lambda}\backslash \tilde{\Omega}|^{\frac{2}{n}}} {|B_\lambda|}\|(u-\sup\limits_{\partial_w\Omega_1}g)^{+}\|^{2}_{L^{2*}(\Omega_{\frac 12})}+2\delta_0^2\\
&\leq& \frac {2\varepsilon^{\frac 2n}}{|B_\lambda|}\|(u-\sup\limits_{\partial_w\Omega_1}g)^+\|^{2}_{H^1(B_{\frac 12} )}+2\delta_0^2\\
&\leq& \frac {C\varepsilon^{\frac 2n}}{|B_\lambda|}+2\delta_0^2 .
\end{eqnarray*}
Similarly, we also have
$$
\frac 1{|B_\lambda|}\int_{(\Omega_{\lambda}\backslash \tilde{\Omega})\cap\{u<\inf\limits_{\partial_w\Omega_1}g\}}|u|^{2}dx \leq \frac {C\varepsilon^{\frac 2n}}{|B_\lambda|}+2\delta_0^2.
$$
Therefor we have
$$
\frac 1{|B_\lambda|}\int_{\Omega_{\lambda}\backslash \tilde{\Omega}}|u|^{2}dx\leq \frac {C\varepsilon^{\frac 2n}}{|B_\lambda|}+5\delta_0^2.
$$
Further more, for each $0<\lambda<r_{0}$, we have
\begin{eqnarray*}
\frac{1}{|B_{\lambda}|}\int_{\Omega_{\lambda}}|u|^{2}dx &=& \frac{1}{|B_{\lambda}|}\int_{\Omega_{\lambda}\cap \tilde{\Omega}}|u|^{2}dx+\frac 1{|B_\lambda|}\int_{\Omega_{\lambda}\backslash \tilde{\Omega}}|u|^{2}dx\\
&\leq &\frac{1}{|B_{\lambda}|}\int_{\Omega_{\lambda}\cap \tilde{\Omega}}|u-h(0)|^{2}dx+\frac {C\varepsilon^{\frac 2n}}{|B_\lambda|}+5\delta_0^2\\
&\leq&\frac{2}{|B_{\lambda}|}\int_{\Omega_{\lambda}\cap \tilde{\Omega}}|u-h|^{2}dx
+\frac{2}{|B_{\lambda}|}\int_{\Omega_{\lambda}\cap \tilde{\Omega}}|h-h(0)|^{2}dx +\frac {C\varepsilon^{\frac 2n}}{|B_\lambda|}+5\delta_0^2\\
&\leq&\frac{C\varepsilon^{\frac {2}{n}}}{|B_{\lambda}|}+2C_{0}^{2}\frac{\lambda^{2\alpha_0}}{r_{0}^{2\alpha_0}}+5\delta_0^2.
\end{eqnarray*}
Now for any $0<\alpha<\alpha_0$, we take $\lambda$ small enough such that
$$2C_{0}^{2}\frac{\lambda^{2\alpha_0}}{r_{0}^{2\alpha_0}}\leq\frac{1}{3}\lambda^{2\alpha},
$$
and further we take $\varepsilon$ small sufficiently such that
$$\frac{C\varepsilon^{\frac 2n}}{|B_{\lambda}|}\leq\frac{1}{3}\lambda^{2\alpha}
$$
and  $\delta_{0}=\min\{\frac {\lambda^{\alpha}}{\sqrt{15}}, \delta(\varepsilon)\}$, where $\delta(\varepsilon)$ is in Lemma $\ref{lm3}$. Thus  Lemma follows.
\end{proof}

{\bf{Proof of Theorem $\ref{thm1}$:}}

We first discuss the normalization of the estimates. Without loss of generality, we may assume $g(0)=0$. Otherwise, let $v(x)=u(x)-g(0)$. Then the estimate of $v$ will translate to that for $u$.

Let $\delta_0$ be as in Lemma $\ref{key2}$. We can assume the normalization conditions:
$$ \displaystyle{\frac{1}{|B_1|}}\int_{\Omega_{1}}u^{2}dx\leq 1,~
 [f]^{2}_{\mathcal{L}^{2,-2+\alpha}(0)}\leq\delta_{0}^{2}, ~  [g]_{C^{\alpha}(0)}\leq \delta_0. $$
Otherwise, we set
$$v(x)=\displaystyle\frac{u(x)}{\left(\displaystyle\frac{1}{|B_{1}|}\int_{\Omega_{1}}u^{2}dx
\right)^{\frac{1}{2}}+\displaystyle\frac{1}{\delta_{0}}\left([f]_{\mathcal{L}^{2,-2+\alpha}(0)}+[g]_{C^{\alpha}(0)}\right)}
\triangleq\displaystyle\frac{u(x)}{N},\quad x\in B_{1}\cap\Omega.
$$
Then $v(x)$ is a weak solution of
\begin{equation}\label{4-2}
\left\{
\begin{array}{rcll}
-\Delta v&=&{f}_{N}\qquad&\text{in}~~\cap\Omega,\\
v&=&g_{N}\qquad&\text{on}~~\partial\Omega,\\
\end{array}
\right.
\end{equation}
where ${f}_{N}=\displaystyle\frac{f}{N}$, $g_{N}=\displaystyle\frac{g}{N}$. Moreover, it follows that
\begin{equation*}\label{4-3}\left(\frac{1}{|B_{1}|}\displaystyle\int_{\Omega_{1}}v^{2}dx\right)^{\frac{1}{2}}=
\frac{\left(\displaystyle\frac{1}{|B_{1}|}\displaystyle\int_{\Omega_{1}}u^{2}dx\right)^{\frac{1}{2}}}
{N}\leq1,
\end{equation*}
and
$$
[f_{N}]^{2}_{\mathcal{L}^{2,-2+\alpha}(0)}\leq\delta_{0}^{2}, ~~~ [g_N]_{C^{\alpha}(0)}\leq \delta_0.
$$

Now, we prove the following claim inductively:
\begin{equation}\label{ite3}
\left(\frac{1}{|B_{\lambda^{k}}|}\int_{\Omega_{\lambda^{k}}}u^{2}dx\right)^{\frac{1}{2}}\leq \lambda^{k\alpha},\quad \forall k\geq0.
\end{equation}
When $k=0$, (\ref{ite3}) is the condition on $u$. By normalized assumption, we know that
$$
\frac 1{|B_1|}\int_{\Omega_1}|f(x)|^2dx\leq [f]^2_{\mathcal{L}^{2,-2+\alpha}(0)}\leq \delta_0^2,
$$
and
$$
\|g\|_{L^{\infty}(\partial_w\Omega_1)}\leq [g]_{C^{\alpha}(0)}\leq\delta_0.
$$
Hence $(\ref{ite3})$  holds for $k=1$  from Lemma $\ref{key2}$.
Now let us assume  that the conclusion is true for $k>1$. Let
$$w(x)=\frac{u(\lambda^{k}x)}{\lambda^{k\alpha}},\quad \tilde{\Omega}=\{x:\lambda^{k}x\in \Omega\}.
$$
It is easy to check that for any $k\geq 0$, $\tilde{\Omega}$ is $L$-uniform and $\theta$-admissible,
and $w$ is a weak solution to
\begin{eqnarray*}
\left\{
\begin{array}{rcll}
-\Delta w&=&\tilde{f}\qquad&\text{in}~~\tilde{\Omega},\\
w&=&\tilde{g}\qquad&\text{on}~~\partial\tilde{\Omega}, \\
\end{array}
\right.
\end{eqnarray*}
where $\tilde{f}(x)
=\displaystyle\frac{f(\lambda^{k}x)}{\lambda^{k(\alpha-2)}}$, $\tilde{g}(x)
=\displaystyle\frac{g(\lambda^{k}x)}{\lambda^{k\alpha}}$.
Thus by using the inductive assumption, we obtain that
$$\frac{1}{|B_{1}|}\displaystyle\int_{\tilde{\Omega}_{1}}w^{2}dx=
\frac{1}{|B_{1}|}\displaystyle\int_{\tilde{\Omega}_{1}}\frac{|u(\lambda^{k}x)|^{2}}{\lambda^{2k\alpha}}dx
=\frac{\displaystyle\frac{1}{|B_{\lambda^{k}}|}\displaystyle\int_{\Omega_{\lambda^{k}}}|u|^{2}dx}{\lambda^{2k\alpha}}
\leq1,
$$
$$\frac 1{|B_1|}\int_{\tilde{\Omega}_{1}}\tilde{f}^{2}dx=\frac 1{|B_1|}
\int_{\Omega_{1}}\frac{|f(\lambda^{k}x)|^{2}}{\lambda^{2k(\alpha-2)}}dx
=\frac{\frac 1{|B_{\lambda^k}|}\displaystyle\int_{\Omega_{\lambda^k}}|f|^{2}dx}{\lambda^{2k(\alpha-2)}}
\leq[f]^{2}_{\mathcal{L}^{2,-2+\alpha}(0)}\leq\delta_{0}^{2},
$$
and
$$\|\tilde{g}\|_{L^{\infty}(\partial_w\tilde{\Omega}_1)}=\|\frac{g(\lambda^{k}x)}{\lambda^{k\alpha}}\|_{L^{\infty}(\partial_w\tilde{\Omega}_1)} \leq [g]_{C^\alpha(0)}\leq \delta_{0}.$$
Hence we can apply Lemma $\ref{key2}$ for $w$ to obtain that
$$\left(\frac{1}{|B_{\lambda}|}\int_{\tilde{\Omega}_{\lambda}}w^{2}dx\right)^{\frac{1}{2}}\leq\lambda^{\alpha}.
$$
We scale back to get
$$\left(\frac{1}{|B_{\lambda^{k+1}}|}\int_{{\Omega}_{\lambda^{k+1}}}|u(x)|^{2}dx\right)^{\frac{1}{2}}
\leq\lambda^{(k+1)\alpha} .
$$
Thus we prove the $(k+1)$-th step.

Finally, for $0<r\leq 1$, there is a k with $\lambda^{k+1}\leq r\leq \lambda^{k}$, and that
\begin{eqnarray*}
\frac{1}{|B_{r}|}\int_{{\Omega}_{r}}|u(x)|^{2}dx&\leq&\frac{1}{\lambda^{n}|B_{\lambda^{k}}|}\int_{{\Omega}_{\lambda^{k}}}|u(x)|^{2}dx\\
&\leq& \frac{\lambda^{2k\alpha}}{\lambda^{n}}\\
&\leq& \frac {1}{\lambda^{n+2\alpha }}r^{2\alpha}.
\end{eqnarray*}
This implies that $u$ is $C^\alpha$ at 0 in the $L^{2}$ sense, and
$$
[u]_{\mathcal{L}^{2,\alpha}(0)}\leq C\left(\displaystyle\frac{1}{|B_{1}|}\int_{\Omega_{1}}u^{2}dx
\right)^{\frac{1}{2}}+C\left([f]_{\mathcal{L}^{2,-2+\alpha}(0)}+||g||_{C^{\alpha}(0)}\right)
$$
for some universal constant $C$. Thus we complete the proof of Theorem \ref{thm1}.
\
\

\section{Boundary pointwise $C^{1,\alpha}$ estimate}
In this section, we will prove the boundary pointwise $C^{1,\alpha}$ estimate  at $0\in \partial \Omega$ which is stated in Theorem \ref{thm2}. We would like to point out that by the assumption of $\partial\Omega\in C^{1,\alpha}(0)$, it can be inferred that $$|x_n|\leq [\partial \Omega]_{C^{1,\alpha}(0)}|x'|^{1+\alpha}$$ for any $x\in \partial _\omega\Omega_1 $. We denote that $K_\Omega=[\partial \Omega]_{C^{1,\alpha}(0)}$.

First we show the following approximation lemma by the compactness method.
\begin{lm}[Compactness lemma]\label{lm4}
There exists a small $\delta=\delta(\varepsilon)>0$ such that for any weak solution of $(\ref{1})$ in $\Omega$ with $$\displaystyle\frac{1}{|B_{1}|}\displaystyle\int_{\Omega_{1}}u^{2}dx\leq 1, ~\frac 1{|B_1|}\int_{\Omega_{1}}f^{2}dx\leq \delta^2, ~\|g\|_{L^{\infty}(\partial_w\Omega_1)}\leq \delta, ~K_\Omega\leq \delta,$$
there exists a harmonic function $h(x)$ defined in $B_{\frac{1}{2}}$ with
$$\int_{B_{\frac 12}}|h(x)|^2dx\leq 2 |B_1|$$
and $h$ is odd in $x_n$
such that
$$\int_{B_{\frac{1}{2}}\cap\Omega}|u-h|^{2}dx\leq\varepsilon^{2}.
$$
\end{lm}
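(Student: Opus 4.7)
My plan is to prove the lemma by a compactness/contradiction argument that parallels Lemma \ref{lm3}, but where the crucial new ingredient is that the smallness of $K_\Omega$ forces the limiting domain to be the flat half ball, so that the limit solution extends across the boundary by odd reflection to a harmonic function on the whole ball.

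Suppose the conclusion fails. Then there exist $\varepsilon_0>0$ and sequences $\Omega_k$, $u_k$, $f_k$, $g_k$ satisfying
$$
\frac 1{|B_1|}\int_{(\Omega_k)_1}u_k^2\,dx\le 1,\qquad \frac 1{|B_1|}\int_{(\Omega_k)_1} f_k^2\,dx\le \tfrac{1}{k^2},
\qquad \|g_k\|_{L^\infty(\partial_w(\Omega_k)_1)}\le \tfrac{1}{k},\qquad K_{\Omega_k}\le \tfrac 1k,
$$
but for which no admissible harmonic function $h$ on $B_{1/2}$ (odd in $x_n$, with the stated $L^2$ bound) satisfies $\int_{B_{1/2}\cap\Omega_k}|u_k-h|^2\,dx\le \varepsilon_0^2$. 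The pointwise $C^{1,\alpha}$ condition with $K_{\Omega_k}\to 0$ pinches $\partial \Omega_k$ near $0$ between the graphs $x_n=\pm K_{\Omega_k}|x'|^{1+\alpha}$, so $\Omega_k\cap B_{3/4}$ converges in Hausdorff distance to the upper half ball $B^+_{3/4}:=B_{3/4}\cap\{x_n>0\}$; in particular for any small $\rho>0$ we have $|(\Omega_k\cap B_{3/4})\,\Delta\, B^+_{3/4}|\le \rho$ for $k$ large.

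Next I would extract the limit profile. By Lemma \ref{lm1}, $(u_k-\sup_{\partial_w(\Omega_k)_1}g_k)^+$ and $(u_k-\inf_{\partial_w(\Omega_k)_1}g_k)^-$, both extended by zero to $B_{3/4}$, are uniformly bounded in $H^1(B_{3/4})$, hence converge (along a subsequence) weakly in $H^1(B_{3/4})$ and strongly in $L^2(B_{3/4})$. Exactly as in the proof of Lemma \ref{lm3}, since $g_k\to 0$ uniformly the strong $L^2$ limits agree with the $L^2$-limits of $u_k^+$ and $u_k^-$, and consequently $u_k\to u_\infty$ strongly in $L^2(B_{3/4})$, with $u_\infty\equiv 0$ on $B_{3/4}\setminus \overline{B^+_{3/4}}$ and $\int_{B^+_{3/4}}u_\infty^2\le |B_1|$. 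For any $\varphi\in C_0^\infty(B^+_{3/4})$, $\text{supp}\,\varphi$ lies in $\Omega_k$ for $k$ large, so the interior equation $\int\nabla u_k\cdot\nabla\varphi = \int f_k\varphi$ passes to the limit and yields $\Delta u_\infty=0$ in $B^+_{3/4}$.

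The key step is to verify that the limit vanishes on the flat piece $\{x_n=0\}\cap B_{1/2}$, so that odd reflection produces the desired harmonic $h$. Because $(u_k-\sup g_k)^+$'s zero extension lies in $H^1(B_{3/4})$ with $\sup g_k\to 0$, the limit of $u_k^+$ (and similarly $u_k^-$) is a nonnegative $H^1$ function whose zero extension across $\{x_n=0\}$ lies in $H^1(B_{3/4})$; since this extension vanishes on the half ball below, its trace on $\{x_n=0\}\cap B_{3/4}$ is zero. Thus $u_\infty\in H^1(B^+_{3/4})$ is harmonic with vanishing trace on $\{x_n=0\}\cap B_{1/2}$, and the odd reflection
$$
h(x',x_n)=\begin{cases} u_\infty(x',x_n), & x_n\ge 0,\\ -u_\infty(x',-x_n), & x_n<0,\end{cases}
$$
is harmonic in $B_{1/2}$ by the Schwarz reflection principle, odd in $x_n$, and satisfies $\int_{B_{1/2}}|h|^2\,dx\le 2\int_{B^+_{1/2}}u_\infty^2\,dx\le 2|B_1|$. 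Then $\int_{B_{1/2}\cap\Omega_k}|u_k-h|^2\,dx\to 0$ by the strong $L^2$ convergence of $u_k$ to $u_\infty=h|_{B^+_{1/2}}$ combined with $|\Omega_k\cap B_{1/2}\setminus B^+_{1/2}|\to 0$, contradicting the contrary assumption.

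The main obstacle I anticipate is the boundary-trace step in the last paragraph: justifying rigorously that the $L^2$ limit $u_\infty$ actually has zero trace on the flat face $\{x_n=0\}\cap B_{1/2}$, given that each $\Omega_k$ has irregular boundary and only the zero extensions of the truncations $(u_k-\sup g_k)^\pm$ are known to be in $H^1$. The argument must use precisely the two-sided extension property in Definition \ref{uni} together with $\sup g_k,\inf g_k\to 0$ to identify the weak $H^1$ limit of $u_k$ (after zero extension) as an $H^1(B_{3/4})$ function whose support lies in the closed upper half ball, whence the trace on $\{x_n=0\}$ vanishes.
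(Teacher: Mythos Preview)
Your argument is correct and follows essentially the same route as the paper: contradiction, energy bounds from Lemma~\ref{lm1}, strong $L^2$ compactness as in Lemma~\ref{lm3}, harmonicity of the limit in $B^+_{1/2}$, vanishing trace on $\{x_n=0\}$, and odd reflection. The only cosmetic difference is that the paper dispatches the zero-trace step by citing \cite{BW04}, whereas you spell out the reason directly via the $H^1$ zero-extension of the truncations---your version is in fact the more self-contained of the two.
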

\begin{proof} First we notice that $diam(\Omega_1)\geq 2-\delta\geq 1$ if $\delta$ is small.
We prove it by contradiction. If not, there is an $\varepsilon_0 >0$ so that for each integer $k$, there is a weak solution of
\begin{eqnarray*}
\left\{
\begin{array}{rcll}
-\Delta u_k&=&f_k\qquad&\text{in}~~\Omega_{k},\\
u_k&=&g_k\qquad&\text{on}~~\partial\Omega_{k}, \\
\end{array}
\right.
\end{eqnarray*}
with
$$\displaystyle\frac{1}{|B_{1}|}\displaystyle\int_{\Omega_{k}\cap B_1}u_{k}^{2}dx\leq 1, ~\frac{1}{|B_1|}\int_{\Omega_{k}\cap B_1}f_{k}^{2}dx\leq \frac{1}{k^2}, ~\|g_k\|_{L^{\infty}(\partial\Omega_k\cap B_1)}\leq \frac{1}{k},~K_{\Omega_{k}}\leq \frac{1}{k},$$
such that  for any harmonic function $h$ in $B_{\frac{1}{2}}$, which is odd in $x_n$,
with
$$\int_{B_{\frac 12}}|h(x)|^2dx\leq 2 |B_1|,$$
 the following inequality holds
\begin{equation}\label{c}\int_{B_{\frac{1}{2}}\cap\Omega_k}|u_{k}-h|^{2}dx\geq\varepsilon_{0}^{2}.
\end{equation}

%From global $C^{\alpha}$ estimate, we have
%\begin{equation}\label{gl1}
%    (u_k-\sup_{\partial\Omega_k\cap B_1}g_k)^{+}\leq C[d^{\alpha}(x,\partial\Omega_k)+\|f_k\|_{L^{2}%(\Omega_k)}]
%\end{equation}
%\begin{equation}\label{gl2}
%    (u_k-\inf_{\partial\Omega_k\cap B_1}g_k)^{-}\leq C[d^{\alpha}(x,\partial\Omega_k)+\|f_k\|_{L^{2}%(\Omega_k)}]
%\end{equation}
From lemma $\ref{lm1}$ we observe that $$\|(u_k-\sup\limits_{\partial\Omega_k\cap B_1}g_k)^{+}\|_{H^1(B_{\frac{3}{4}})}\leq C,$$
$$\|(u_k-\inf\limits_{\partial\Omega_k\cap B_1}g_k)^{-}\|_{H^1(B_{\frac{3}{4}})}\leq C.$$

Similar to lemma $\ref{lm3}$, we know that $(u_k-\sup\limits_{\partial\Omega_k\cap B_1}g_k)^{+}$ and $(u_k-\inf\limits_{\partial\Omega_k\cap B_1}g_k)^{-}$ converge  weakly to $v_1$ and $v_2$ respectively in $ H^{1}(B_{\frac{3}{4}})$, and converge strongly to $v_1$ and $v_2$ respectively in $L^{2}(B_{\frac{3}{4}})$ up to subsequence. If we let $u_k=0$ in $((\Omega_k)^c)^o$, we also know that $u_k$ converges strongly to $v$ in $L^{2}(B_{\frac{3}{4}})$ for $v=v_1-v_2$. It also holds that
$$\int_{B_{\frac 12}}|v(x)|^2dx\leq 2 |B_1|.$$

For any $ x_0\in  B^{+}_{\frac{1}{2}}$, since $K_{\Omega_k}\leq \frac 1k$, there exists $k_0$ and $t_0>0$, such that for any $k\geq k_0$, $x_0\in \Omega_k$ and $B_{t_0}(x_0)\subset \Omega_k$. Then for any $\eta \in C_{0}^{\infty}(B_{t_{0}}(x_0))$,
$$\int_{B_{t_{0}}(x_0)}f_k\eta dx=\int_{B_{t_{0}}(x_0)} Du_k\cdot D\eta dx.$$
Letting $k\rightarrow\infty$, we have $v$ is a harmonic function in $B_{t_0}(x_0)$. Furthermore, we have $v$ is harmonic function in $B^+_{\frac 12}$.

%To this purpose, for any small $\theta>0$, take $x'\in T_{\frac 12}$, and set $s_0=\max \{s: %(x',s)\in \partial \Omega_k\cap B_{\frac 12}\}$. Since $K_{\Omega_k}\leq \frac 1k$, we take $k_0$ sufficiently large such that $-\frac 1k< s_0<\theta $ for any $k\geq k_0$. Since $u_k()$
%\begin{eqnarray*}
%\left\{
%\begin{array}{rcll}
%-\Delta h&=&0\qquad&\text{in}~~B^{+}_{\frac{1}{2}},\\
%h&=&0\qquad&\text{on}~~ T_{\frac{1}{2}}, \\
%\end{array}
%\right.
%\end{eqnarray*}
From \cite{BW04},  it is clear that $v=0$ on $T_{\frac 12}$ since $K_{\Omega_k}\leq \frac 1k$. Set $h$ is the odd extension of $v|_{B^+_{\frac 12}}$. Then $h$ is harmonic in $B_{\frac{1}{2}}$, which is contradict to (\ref{c}).
\end{proof}

In the following we give a key lemma, which will be used repeatedly later.
\begin{lm}[Key Lemma]\label{key22} There is a $C>0$ such that for any $0<\alpha<1$, there exists $0<\lambda<1,~\delta_{0}>0$ such that for any weak solution of $(\ref{1})$
with
$$\displaystyle{\frac{1}{|B_1|}}\int_{\Omega_{1}}u^{2}dx\leq 1,~~
\frac 1{|B_1|}\int_{\Omega_{1}}f^{2}dx\leq \delta_{0}^2,  ~~ \|g\|_{L^{\infty}(\partial_w\Omega_1)}\leq \delta_0,~~ K_\Omega\leq \delta_0,$$ it follows that
$$\left(\frac{1}{|B_{\lambda}|}\int_{\Omega_{\lambda}}|u-p|^{2}dx\right)^{\frac{1}{2}}\leq\lambda^{1+\alpha},
$$
where $p(x)=ax_n,$ with $|a|\leq C.$
\end{lm}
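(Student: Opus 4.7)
The plan is to mirror the strategy of Lemma \ref{key2} in the $C^\alpha$ case. I will apply the Compactness Lemma (Lemma \ref{lm4}) to produce a harmonic function $h$ that approximates $u$ in $L^2$, then use interior regularity of $h$ together with its oddness in $x_n$ to read off the polynomial $p$, and finally close the argument by choosing $\lambda$ and $\varepsilon$ appropriately.

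First, fix $\varepsilon > 0$ to be chosen later and apply Lemma \ref{lm4} (with $\delta_0 \leq \delta(\varepsilon)$) to obtain a harmonic function $h$ on $B_{1/2}$, odd in $x_n$, satisfying $\int_{B_{1/2}} |h|^2 dx \leq 2|B_1|$ and $\int_{\Omega \cap B_{1/2}} |u-h|^2 dx \leq \varepsilon^2$. Standard interior estimates for harmonic functions give $\|h\|_{C^2(\overline{B_{1/4}})} \leq C$. The oddness $h(x',x_n) = -h(x',-x_n)$ forces $h(0) = 0$ and $\partial_i h(0) = 0$ for all $i < n$, so $Dh(0) = a e_n$ where $a := \partial_n h(0)$ satisfies $|a| \leq C$. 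By Taylor's theorem,
\begin{equation*}
|h(x) - ax_n| \leq C|x|^2 \quad \text{for all } x \in B_{1/4}.
\end{equation*}

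Next, set $p(x) = ax_n$. For $\lambda \in (0, 1/4)$, I split
\begin{equation*}
\frac{1}{|B_\lambda|}\int_{\Omega_\lambda} |u-p|^2 dx \leq \frac{2}{|B_\lambda|}\int_{\Omega_\lambda} |u-h|^2 dx + \frac{2}{|B_\lambda|}\int_{\Omega_\lambda} |h-p|^2 dx.
\end{equation*}
The first term is controlled by $2\varepsilon^2/|B_\lambda|$ using the approximation from Lemma \ref{lm4}. For the second term, since $\Omega_\lambda \subset B_\lambda \subset B_{1/4}$, the Taylor estimate gives $|h - p| \leq C\lambda^2$ pointwise on $\Omega_\lambda$, hence the term is bounded by $C\lambda^4$. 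Combining,
\begin{equation*}
\frac{1}{|B_\lambda|}\int_{\Omega_\lambda} |u-p|^2 dx \leq \frac{2\varepsilon^2}{|B_\lambda|} + C\lambda^4.
\end{equation*}

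Finally, I choose the constants. Since $\alpha < 1$, the exponent $4 - 2(1+\alpha) = 2-2\alpha > 0$, so I first pick $\lambda$ small enough that $C\lambda^4 \leq \tfrac{1}{2}\lambda^{2(1+\alpha)}$. With $\lambda$ now fixed, I then choose $\varepsilon$ so small that $2\varepsilon^2/|B_\lambda| \leq \tfrac{1}{2}\lambda^{2(1+\alpha)}$, that is $\varepsilon \leq c\,\lambda^{(n+2+2\alpha)/2}$, and set $\delta_0 = \min\{1, \delta(\varepsilon)\}$ using the $\delta(\varepsilon)$ from Lemma \ref{lm4}. The main conceptual point (as opposed to bookkeeping) is the identification of the correct polynomial: the fact that $p$ has no $x'$-dependent linear piece is forced by the oddness of $h$ in $x_n$, which in turn is guaranteed by the Compactness Lemma precisely because the boundary $\partial\Omega$ is flattening toward $\{x_n = 0\}$ as $K_\Omega \to 0$. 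This is exactly the step where the $C^{1,\alpha}$ assumption on $\partial\Omega$ enters the iteration.
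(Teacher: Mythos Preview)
Your proof is correct and follows essentially the same approach as the paper: apply the Compactness Lemma to obtain the odd-in-$x_n$ harmonic approximation $h$, extract the linear polynomial $p(x)=ax_n$ from its first-order Taylor expansion (the oddness killing the tangential derivatives), split $|u-p|^2$ into $|u-h|^2$ and $|h-p|^2$, and then choose $\lambda$ and $\varepsilon$ in that order. The only cosmetic difference is that the paper phrases the vanishing of tangential derivatives via $h=0$ on $T_{1/2}$ rather than via oddness directly, which is equivalent.
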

\begin{proof}
Let $h$ be the function of the previous lemma which satisfies that
$$
\int_{\Omega\cap B_{\frac{1}{2}}}|u-h|^{2}dx\leq \varepsilon^2
$$
for some $\varepsilon<1$ to be determined, and
$$\int_{B_{\frac{1}{2}}}|h|^2\leq2 |B_1|.$$
By the property of harmonic functions, for $x\in B_{\frac{1}{4}}$,
$$|D^{k}h(x)|\leq \displaystyle{\frac{C}{|B_1|}}\int_{B_{\frac{1}{2}}}h^{2}dx\leq C
$$ for $k=1, 2$.
Now let $p(x)$ be the first order Taylor polynomial of $h$ at $0$. Since $h=0$ on $T_{\frac{1}{2}}$, we have that the form of the polynomial $p(x)$ is
$$p(x)=ax_n,$$
where $|a|\leq |Dh(0)|\leq C$.  It also holds that for any $x\in B_{\frac{1}{4}}$,
$$|(h-p)(x)|\leq C |x|^2.
$$
Therefore for each $0<\lambda<\frac{1}{4}$, we have
\begin{eqnarray*}
\frac{1}{|B_{\lambda}|}\int_{\Omega_{\lambda}}|u-p|^{2}dx
&\leq&\frac{2}{|B_{\lambda}|}\int_{\Omega_{\lambda}}|u-h|^{2}dx
+\frac{2}{|B_{\lambda}|}\int_{\Omega_{\lambda}}|h-p|^{2}dx\\
&\leq&\frac{2\varepsilon^{2}}{|B_{\lambda}|}+2C^{2}\lambda^{4}.
\end{eqnarray*}
Now for any $0<\alpha<1$, we take $\lambda$ small enough such that
$$2C^{2}\lambda^{4}\leq\frac{1}{2}\lambda^{2(1+\alpha)},
$$
and further we take $\varepsilon$ small sufficiently such that
$$\frac{2\varepsilon^{2}}{|B_{\lambda}|}\leq\frac{1}{2}\lambda^{2(1+\alpha)}
$$
and  $\delta_{0}=\delta(\varepsilon)$ in Lemma $\ref{lm4}$. Thus  Lemma follows.
\end{proof}

{\bf{Proof of Theorem $\ref{thm2}$:}}

Without loss of generality we assume that $g(0)=0$ and $Dg(0)=0$. Otherwise, we may consider $v(x)=u(x)-g(0)-Dg(0)\cdot x$. Let $\delta_0$ be as in Lemma $\ref{key22}$. We can assume the normalization conditions:
$$ \displaystyle{\frac{1}{|B_1|}}\int_{\Omega_{1}}u^{2}dx\leq 1,~
 [f]^{2}_{\mathcal{L}^{2,-1+\alpha}(0)}\leq\delta_{0}^{2}, ~  [g]_{C^{1,\alpha}(0)}\leq \frac{\delta_0}{2},~ [\partial\Omega]_{C^{1,\alpha}(0)}\leq \frac{\delta_0}{C_0}, $$
where $C_0$ is a constant to be determined later. Otherwise, we set
$$v(y)=\displaystyle\frac{u(x)}{\left(\displaystyle\frac{1}{|B_{1}|}\int_{\Omega_{1}}u^{2}dx
\right)^{\frac{1}{2}}+\displaystyle\frac{1}{\delta_{0}}\left([f]_{\mathcal{L}^{2,-1+\alpha}(0)}+2[g]_{C^{1,\alpha}(0)}\right)}
\triangleq\displaystyle\frac{u(x)}{N},
$$ where $y=\frac x{R}$ and $x\in B_{1}\cap\Omega$. By choosing $R$ small enough which depends only on $n$ and $K_\Omega$, the normalization conditions hold. Without loss of generality, we assume that $R=1$.

By definition, we have
$$|x_n|\leq [\partial\Omega]_{C^{1,\alpha}(0)}|x'|^{1+\alpha}\leq \frac{\delta_0}{C_0} |x'|^{1+\alpha},~~\forall x\in \partial\Omega\cap B_1.$$
Now, we prove the following claim inductively: there are harmonic polynomials
$$p_k(x)=a^{k}x_n$$
such that
\begin{equation}\label{ite2}
\left(\frac{1}{|B_{\lambda^{k}}|}\int_{\Omega_{\lambda^{k}}}|u-p_k|^{2}dx\right)^{\frac{1}{2}}\leq \lambda^{k(1+\alpha)},\quad \forall k\geq 0.
\end{equation}
and $a^0=0$,
\begin{align}\label{co2}
    |a^{k}-a^{k-1}|&\leq C \lambda^{\alpha (k-1)}, \quad \forall k\geq 1.
    \end{align}
By the normalized assumption, we know that $k=0$ is just the assumption and the claim holds for $k=1$  since Lemma $\ref{key22}$.
Now let us assume  that the conclusion is true for $k$. Let
$$w(x)=\frac{(u-p_k)(\lambda^{k}x)}{\lambda^{k(1+\alpha)}},\quad \tilde{\Omega}=\{x:\lambda^{k}x\in \Omega\}.
$$
It is easy to check that for any $k\geq 0$, $\tilde{\Omega}$ is $C^{1,\alpha}$ at $0$,
and by Lemma \ref{lm11} $w$ is a weak solution to
\begin{eqnarray*}
\left\{
\begin{array}{rcll}
-\Delta w&=&\tilde{f}\qquad&\text{in}~~\tilde{\Omega},\\
w&=&\tilde{g}\qquad&\text{on}~~\partial\tilde{\Omega}, \\
\end{array}
\right.
\end{eqnarray*}
where $\tilde{f}(x)
=\displaystyle\frac{f(\lambda^{k}x)}{\lambda^{k(\alpha-1)}}$, $\tilde{g}(x)
=\displaystyle\frac{(g-p_k)(\lambda^{k}x)}{\lambda^{k(1+\alpha)}}$.
Thus by using the inductive assumption, we obtain that
$$\frac{1}{|B_{1}|}\displaystyle\int_{\tilde{\Omega}_{1}}w^{2}dx=
\frac{1}{|B_{1}|}\displaystyle\int_{\tilde{\Omega}_{1}}\frac{|u-p_k)(\lambda^{k}x)|^{2}}{\lambda^{2k(\alpha+1)}}dx
=\frac{\displaystyle\frac{1}{|B_{\lambda^{k}}|}\displaystyle\int_{\Omega_{\lambda^{k}}}|u-p_k|^{2}dx}{\lambda^{2k(\alpha+1)}}
\leq1,
$$
$$\frac 1{|B_1|}\int_{\tilde{\Omega}_{1}}\tilde{f}^{2}dx=\frac 1{|B_1|}
\int_{\Omega_{1}}\frac{|f(\lambda^{k}x)|^{2}}{\lambda^{2k(-1+\alpha)}}dx=\frac 1{\lambda^{2k(-1+\alpha)}|B_{\lambda^k}|}
\int_{\Omega_{\lambda^k}}|f(y)|^{2}dy
\leq [f]^{2}_{\mathcal{L}^{2.-1+\alpha}(0)}
\leq\delta_{0}^{2},
$$
$$K_{\tilde{\Omega}}= \lambda^{k\alpha}K_{\Omega}\leq \frac{\delta_0}{C_0},$$
\begin{eqnarray*}
\|\tilde{g}\|_{L^{\infty}(\partial\tilde\Omega\cap B_1)}&=&\|\frac{(g-p_k)(y)}{\lambda^{k(1+\alpha)}}\|_{L^{\infty}(\partial\Omega\cap B_{\lambda^{k}})}\\
&\leq& \|\frac{g(y)}{\lambda^{k(1+\alpha)}}\|_{L^{\infty}(\partial\Omega\cap B_{\lambda^{k}})}+\|\frac{a^{k}y_n}{\lambda^{k(1+\alpha)}}\|_{L^{\infty}(\partial\Omega\cap B_{\lambda^{k}})}\\
&\leq & [g]_{C^{1,\alpha}(0)}+CK_{\Omega}\\
&\leq & \frac {\delta_0}{2}+\frac {C}{C_0}\delta_0\leq\delta_{0}.
\end{eqnarray*} Here we choose $C_0$ such that $\frac{C}{C_0}\leq \frac 12$. Now we can apply Lemma $\ref{key22}$ for $w$ to obtain that
$$\left(\frac{1}{|B_{\lambda}|}\int_{\tilde{\Omega}_{\lambda}}|w-\bar{p}|^{2}dx\right)^{\frac{1}{2}}\leq\lambda^{1+\alpha}
$$ where $\bar{p}(x)=ax_n$ for $x\in \tilde{\Omega}_\lambda$
with $|a|\leq C$. We scale back to get
$$\left(\frac{1}{|B_{\lambda^{k+1}}|}\int_{{\Omega}_{\lambda^{k+1}}}|u(x)-p_{k}(x)-\lambda^{(1+\alpha)k}\bar{p}(\frac{x}{\lambda^k})|^{2}dx\right)^{\frac{1}{2}}
\leq\lambda^{(k+1)(1+\alpha)} .
$$
Thus we prove the $(k+1)$-th step by choosing $p_{k+1}(x)=p_{k}(x)+\lambda^{(1+\alpha)k}\bar{p}(\frac{x}{\lambda^k})$. It is clear that
$$|a^{k+1}-a^k|\leq \lambda^{\alpha k}|a|\leq C\lambda^{\alpha k}.$$
It is clear that ${a^k}$ is Cauchy sequence and converges to $a_\infty$. The limiting polynomial is $$ p_\infty(x)=a_\infty x_n$$ satisfies
$$|p_k(x)-p_\infty(x)|\leq C\lambda^{k(1+\alpha)},~~~\text { for any } |x|\leq \lambda^k. $$
Hence,  for any $r\leq 1$, there is a $k$ with $\lambda^{k+1}\leq r\leq \lambda^{k}$, and that
\begin{eqnarray*}
\frac{1}{|B_{r}|}\int_{{\Omega}_{r}}|u(x)-p_\infty|^{2}dx&\leq&\frac{1}{\lambda^{n}|B_{\lambda^{k}}|}\int_{{\Omega}_{\lambda^{k}}}|u(x)-p_{k}(x)|^{2}dx+\frac{1}{\lambda^{n}|B_{\lambda^{k}}|}\int_{{\Omega}_{\lambda^{k}}}|p_\infty(x)-p_{k}(x)|^{2}dx\\
&\leq& \frac{\lambda^{2k(1+\alpha)}}{\lambda^{n}}+\frac{C\lambda^{2k(1+\alpha)}}{\lambda^{n}}\\
&\leq& Cr^{2(1+\alpha)}
.
\end{eqnarray*}
This implies that $u$ is $C^{1,\alpha}$ at 0 in the $L^{2}$ sense, and
$$
[u]_{\mathcal{L}^{2,1+\alpha}(0)}\leq C\left(\displaystyle\frac{1}{|B_{1}|}\int_{\Omega_{1}}u^{2}dx
\right)^{\frac{1}{2}}+C\left([f]_{\mathcal{L}^{2,-1+\alpha}(0)}+||g||_{C^{1,\alpha}(0)}\right).
$$
Thus we complete the proof of Theorem \ref{thm2}.

\section{Boundary pointwise $C^{2,\alpha}$ estimate}
Different from boundary pointwise $C^{1,\alpha}$ regularity, the main difficulty for boundary pointwise $C^{2,\alpha}$ regularity lies in that we do not have the following estimate:
$$|x_n|\leq C|x'|^{2+\alpha}, ~\forall x\in \partial_\omega \Omega_1.$$
As a result, we cannot check the condition in Lemma $\ref{key3}$ when we iterate. In the case of non-divergence form equation, in \cite{LZ2020}, the authors show the pointwise $C^{2,\alpha}$ estimats under the assumption of $Du(0)=0$, then use the normalization technique to remove the assumption $Du(0)=0$. Let $K_\Omega=[\partial \Omega]_{C^{2,\alpha}(0)}$, and $S$ is a symmetric matrix. Without loss of generality, we assume that
$$
B_{1}\cap \{(x',x_n)|x_n>\frac 12 x'^TSx'+K_\Omega|x'|^{2+\alpha}\}\subset B_{1}\cap \Omega,
  $$
and
$$
B_{1}\cap \{(x',x_n)|x_n<\frac 12 x'^TSx'-K_\Omega|x'|^{2+\alpha}\}\subset B_1\cap \Omega^{c}.
$$

As the previous section, we have the following compactness lemma.

\begin{lm}[Compactness lemma]\label{lm5}
For any $\varepsilon>0$, there exists a small $\delta=\delta(\varepsilon)>0$ such that for any weak solution of $(\ref{1})$ in $\Omega$ with
$$\displaystyle\frac{1}{|B_{1}|}\displaystyle\int_{\Omega_{1}}u^{2}dx\leq 1,~~\frac{1}{|B_1|}\int_{\Omega_{1}}f^{2}dx\leq \delta^2,~~\|g\|_{L^{\infty}(\partial_w\Omega_1)}\leq \delta, ~~ ||S||\leq \delta,~~K_\Omega\leq \delta,$$
there exists a harmonic function $h(x)$ in $B_{\frac 12}$ and $h$ is odd in $x_n$ such that
$$\int_{B_{\frac{1}{2}}}|h|^{2}dx\leq2|B_1|$$
and
$$\int_{\Omega\cap B_{\frac{1}{2}}}|u-h|^{2}dx\leq\varepsilon^{2}.$$
\end{lm}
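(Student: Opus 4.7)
The proof plan is to argue by contradiction in the same spirit as Lemma \ref{lm4}, exploiting the fact that the two new hypotheses $\|S\|\leq\delta$ and $K_\Omega\leq\delta$ jointly force $\partial\Omega\cap B_1$ to collapse onto the flat hyperplane $\{x_n=0\}$ in the limit. First I would assume the conclusion fails: there exist $\varepsilon_0>0$ and sequences of admissible data $(u_k,f_k,g_k,\Omega_k, S_k)$ satisfying $\frac 1{|B_1|}\int_{\Omega_k\cap B_1}u_k^2 \leq 1$, $\frac 1{|B_1|}\int_{\Omega_k\cap B_1}f_k^2\leq 1/k^2$, $\|g_k\|_{L^\infty(\partial\Omega_k\cap B_1)}\leq 1/k$, $\|S_k\|\leq 1/k$, $K_{\Omega_k}\leq 1/k$, such that no harmonic function $h$ on $B_{1/2}$ which is odd in $x_n$ and satisfies $\int_{B_{1/2}}|h|^2\leq 2|B_1|$ can also satisfy $\int_{B_{1/2}\cap\Omega_k}|u_k-h|^2\leq\varepsilon_0^2$.

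Next I would run the energy machinery exactly as in Lemma \ref{lm3}. Apply Lemma \ref{lm1} to the truncations $(u_k-\sup_{\partial_w\Omega_k\cap B_1}g_k)^+$ and $(u_k-\inf_{\partial_w\Omega_k\cap B_1}g_k)^-$; these are in $H^1(B_{3/4})$ (after their $0$-extensions) with uniform bounds. Extract subsequences converging weakly in $H^1(B_{3/4})$ and strongly in $L^2(B_{3/4})$ to limits $v_1, v_2\geq 0$. Setting $u_k\equiv 0$ on $(\Omega_k^c)^\circ$ and repeating the Cauchy-sequence argument of Lemma \ref{lm3} gives $u_k\to v := v_1-v_2$ strongly in $L^2(B_{3/4})$, with $\int_{B_{1/2}}v^2\leq 2|B_1|$.

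Then I would identify $v$. Because $\|S_k\|\to 0$ and $K_{\Omega_k}\to 0$, the sets $\partial\Omega_k\cap B_1$ Hausdorff-converge to $T_1=\{x_n=0\}\cap B_1$. Therefore, for any $x_0\in B_{1/2}^+ := \{x_n>0\}\cap B_{1/2}$, there exist $t_0>0$ and $k_0$ with $B_{t_0}(x_0)\subset\Omega_k$ for all $k\geq k_0$; passing to the limit in the interior weak formulation of $-\Delta u_k=f_k$ yields $-\Delta v=0$ in $B_{1/2}^+$. Symmetrically, for any $x_0\in B_{1/2}^- := \{x_n<0\}\cap B_{1/2}$, eventually $B_{t_0}(x_0)\subset \Omega_k^c$, so $u_k\equiv 0$ there and $v\equiv 0$ on $B_{1/2}^-$. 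The trace of $v$ on $T_{1/2}$ vanishes by the same flat-boundary argument invoked in Lemma \ref{lm4} (see \cite{BW04}), combined with the observation that $v_1, v_2$ are each zero on the open lower half. Defining $h$ on $B_{1/2}$ as the odd reflection of $v|_{B_{1/2}^+}$ across $T_{1/2}$ produces a harmonic function on $B_{1/2}$ (by the reflection principle), odd in $x_n$, with $\int_{B_{1/2}}|h|^2\leq 2|B_1|$. Since $u_k\to v$ strongly in $L^2(B_{1/2})$ and $v=h$ on $B_{1/2}^+$ while both $u_k$ (by extension) and $h$ vanish on $B_{1/2}^-$, we obtain $\int_{B_{1/2}\cap \Omega_k}|u_k-h|^2\to 0$, contradicting the standing lower bound $\varepsilon_0^2$ for large $k$.

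The main obstacle will be justifying that $v$ vanishes on the flat interface $T_{1/2}$ and that the odd extension is genuinely harmonic across it. Both points ultimately rest on the combined smallness of $S_k$ and $K_{\Omega_k}$, which forces $\partial\Omega_k$ to approximate $T_1$ in a quantitative sense; the rest of the argument is a routine compactness/reflection package that parallels Lemma \ref{lm4} and the treatment in \cite{BW04}. Once these flatness-in-the-limit facts are established, the contradiction is immediate.
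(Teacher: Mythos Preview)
Your proposal is correct and follows exactly the approach the paper intends: the paper does not write out a separate proof of Lemma~\ref{lm5} but simply states ``As the previous section, we have the following compactness lemma,'' meaning one repeats the contradiction argument of Lemma~\ref{lm4} verbatim, with the only new observation being that $\|S_k\|\to 0$ together with $K_{\Omega_k}\to 0$ forces $\partial\Omega_k\cap B_1$ to flatten onto $\{x_n=0\}$. Your write-up is in fact more detailed than the paper's own treatment of the analogous step.
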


\begin{lm}[Key Lemma]\label{key3} There is $C>0$ such that for any $0<\alpha<1$, there exists $0<\lambda<1,~\delta_{0}>0$ such that for any weak solution of $(\ref{1})$
with
$$\displaystyle{\frac{1}{|B_1|}}\int_{\Omega_{1}}u^{2}dx\leq 1,~~\frac 1{|B_1|}\int_{\Omega_{1}}f^{2}dx\leq \delta_{0}^2, ~~\|g\|_{L^{\infty}(\partial_w\Omega_1)}\leq \delta_0,~~K_\Omega\leq \delta_0,~~||S||\leq \delta_0,$$
it follows that
$$\left(\frac{1}{|B_{\lambda}|}\int_{\Omega_{\lambda}}|u-p|^{2}dx\right)^{\frac{1}{2}}\leq\lambda^{2+\alpha},
$$
where $p(x)=ax_{n}-\frac {a}{2} x'^TSx'+b_{in}x_{n}x_{i}+\frac{a}{2} tr(S)x_{n}^{2}$ is harmonic with
$$|a|+|b_{in}|\leq C,$$
for $i=1,2,\cdots, n-1$. Here the Einstein summation convention is used. \end{lm}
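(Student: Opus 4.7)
The strategy parallels Lemma \ref{key22}, modified to accommodate a second-order Taylor expansion and a curved pointwise boundary. I would fix $\varepsilon \in (0,1)$ to be chosen later, and apply the Compactness Lemma \ref{lm5} with this $\varepsilon$ to produce a function $h$ on $B_{1/2}$, harmonic there and odd in $x_n$, with $\int_{B_{1/2}} h^2\, dx \le 2|B_1|$ and $\int_{\Omega \cap B_{1/2}} |u - h|^2\, dx \le \varepsilon^2$. Interior estimates for harmonic functions then give $|D^k h(0)| \le C(n)$ for $k \le 3$. Oddness of $h$ in $x_n$ forces $h(x',0) \equiv 0$, hence $\partial_i h(0) = 0$ and $\partial_{ij} h(0) = 0$ for $i,j < n$, while $\partial_{nn}h(0) = 0$ because $\partial_{nn} h$ is itself odd in $x_n$. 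Writing $a := \partial_n h(0)$ and $b_{in} := \partial_{in} h(0)$ for $i < n$, Taylor's theorem reads
$$h(x) = ax_n + b_{in} x_n x_i + R(x), \qquad |R(x)| \le C|x|^3,$$
with $|a|, |b_{in}| \le C$, which is already the coefficient bound claimed in the conclusion.

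Next I would set
$$p(x) := ax_n - \frac{a}{2}\, x'^T S x' + b_{in} x_n x_i + \frac{a}{2}\, tr(S)\, x_n^2,$$
and check harmonicity directly: $\Delta p = -a\, tr(S) + a\, tr(S) = 0$. The two $S$-terms play a dual role: they cancel in the Laplacian, and they make $p$ compatible with the approximate boundary $\{x_n = \tfrac{1}{2} x'^T S x'\}$, on which the leading quadratic part of $p$ vanishes so that $p = O(|x'|^3)$ along that paraboloid. Combined with the Taylor expansion of $h$,
$$|h(x) - p(x)| \le |R(x)| + \frac{|a|}{2}\, |x'^T S x'| + \frac{|a|}{2}\, |tr(S)|\, x_n^2 \le C|x|^3 + C\|S\||x|^2,$$
so on $B_\lambda$ with $\lambda < 1/4$ one has $|h - p| \le C(\lambda^3 + \delta_0 \lambda^2)$.

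The triangle inequality then yields
$$\frac{1}{|B_\lambda|}\int_{\Omega_\lambda} |u - p|^2\, dx \le \frac{2}{|B_\lambda|}\int_{\Omega \cap B_{1/2}} |u - h|^2\, dx + \frac{2}{|B_\lambda|}\int_{\Omega_\lambda} |h - p|^2\, dx \le \frac{2\varepsilon^2}{|B_\lambda|} + C\lambda^6 + C\delta_0^2 \lambda^4.$$
Given $\alpha \in (0,1)$, I would first choose $\lambda$ so small that $C\lambda^6 \le \tfrac{1}{3}\lambda^{2(2+\alpha)}$, then impose $\delta_0^2 \le \lambda^{2\alpha}/(3C)$ to absorb the $\delta_0^2 \lambda^4$ term, then pick $\varepsilon$ so small that $2\varepsilon^2/|B_\lambda| \le \tfrac{1}{3}\lambda^{2(2+\alpha)}$, and finally shrink $\delta_0$ to $\min\{\delta_0, \delta(\varepsilon)\}$ with $\delta(\varepsilon)$ from Lemma \ref{lm5} so its hypotheses are met. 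The main obstacle, and the only genuinely new ingredient beyond Lemma \ref{key22}, is the algebraic construction of $p$: the two correction terms are \emph{forced} by the simultaneous demands that $p$ be harmonic and that it absorb the quadratic curvature of $\partial\Omega$ encoded in $S$; once these are in place, the compactness-plus-Taylor template carries everything through.
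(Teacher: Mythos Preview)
Your proposal is correct and follows essentially the same route as the paper: apply the compactness lemma, take the second-order Taylor polynomial $\tilde p(x)=ax_n+b_{in}x_nx_i$ of the odd harmonic approximation $h$, adjoin the two $S$-correction terms to obtain the harmonic polynomial $p$, and then balance the three error contributions $\varepsilon^2/|B_\lambda|$, $\lambda^6$, and $\|S\|^2\lambda^4$ against $\lambda^{2(2+\alpha)}$ by choosing $\lambda$, $\varepsilon$, and $\delta_0$ in that order. The only cosmetic difference is that the paper splits $|u-p|^2$ into three pieces via $\tilde p$ while you split into two; one small point to tighten is that the Taylor remainder bound $|R(x)|\le C|x|^3$ requires the third-derivative estimate on a ball (e.g.\ $B_{1/8}$), not merely at the origin, but this follows from the same interior estimates you invoke.
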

\begin{proof}
Let $h$ be the harmonic function of the previous lemma which satisfies that
$$
\int_{\Omega\cap B_{\frac{1}{2}}}|u-h|^{2}dx\leq \varepsilon^2
$$
for some $\varepsilon<1$ to be determined, and
$$\int_{B_{\frac{1}{2}}}|h|^2\leq2|B_1|.$$
By the property of harmonic functions, for $x\in B_{\frac{1}{4}},$  it follows that
$$\|h\|_{L^{\infty}(B_{\frac{1}{4}})}\leq C\|h\|_{L^{2}(B_{\frac{1}{2}})}\leq C.
$$
Hence we have  for $k=1, 2, 3$
$$|D^{k}h(x)|\leq C, \text{ for } x\in B_{\frac{1}{8}}.
$$
Now let $\tilde{p}(x)$ be the second order Taylor polynomial of $h$ at $0$,  as $h$ is odd at $x_n$ direction, we have that the form of the polynomial is
$$ \tilde{p}(x)=ax_{n}+b_{in}x_{n}x_{i}.$$  Here $i=1,2,\cdots, n-1$. We know that $\tilde{p}(x)$ is  also harmonic.

It is clear that
$$|a|+|b_{in}|\leq C, \quad \text { for } i=1,2,\cdots n-1.$$
And for any $x\in B_{\frac{1}{8}}$,
$$|(h-\tilde{p})(x)|\leq C|x|^3.
$$
Therefore for each $0<\lambda<\frac{1}{8}$, we have
\begin{eqnarray*}
\frac{1}{|B_{\lambda}|}\int_{\Omega_{\lambda}}|u-p|^{2}dx
&\leq&\frac{4}{|B_{\lambda}|}\int_{\Omega_{\lambda}}|u-h|^{2}dx
+\frac{4}{|B_{\lambda}|}\int_{\Omega_{\lambda}}|h-\tilde{p}|^{2}dx+\frac{4}{|B_{\lambda}|}\int_{\Omega_{\lambda}}|\frac {a}{2} x'^TSx'-\frac{a}{2}tr(S)x_{n}^{2}|^{2}dx\\
&\leq&\frac{4\varepsilon^2}{|B_{\lambda}|}+4C^{2}\lambda^{6}+4C^{2}\lambda^{4}\|S\|^{2}.
\end{eqnarray*}
Now for any $0<\alpha<1$, we take $\lambda$ small enough such that
$$4C^{2}\lambda^{6}\leq\frac{1}{3}\lambda^{2(2+\alpha)},
$$
and further we take $\varepsilon$ small sufficiently such that
$$\frac{4\varepsilon^{2}}{|B_{\lambda}|}\leq\frac{1}{3}\lambda^{2(2+\alpha)},
$$
and choose $\delta_{0}=\min\{\delta(\varepsilon),\frac{\sqrt{3}}{18C}\lambda^{\alpha}\}$ where $\delta(\varepsilon)$ is in Lemma $\ref{lm5}$, such that
$$9\lambda^{4}\|S\|^{2}\leq\frac{1}{3}\lambda^{2(2+\alpha)}.
$$
 Thus  Lemma follows.
\end{proof}

{\bf{Proof of Theorem $\ref{thm3}$:}}
Without loss of generality we assume that $f(0)=0$, $g(0)=0$, $Dg(0)=0$, and $D^2g(0)=0$. Otherwise, we may take $v(x)=u(x)-\frac {f(0)}{2n}|x|^2- g(0)-Dg(0)\cdot x-\frac 12 x^T D^2g(0)x $.  Let $\delta_0$ be as in Lemma $\ref{key3}$. We can assume the normalization conditions:$$ \displaystyle{\frac{1}{|B_1|}}\int_{\Omega_{1}}u^{2}dx\leq 1,~
 [f]^{2}_{\mathcal{L}^{2,\alpha}(0)}\leq\delta_{0}^{2}, ~  [g]_{C^{2,\alpha}(0)}\leq \frac{\delta_0}{2},~ K_\Omega\leq \frac{\delta_0}{C_0}, \|S\|\leq \frac{\delta_0}{C_0},$$
where $C_0$ is a constant to be determined later. Otherwise, we set
$$v(y)=\displaystyle\frac{u(x)}{\left(\displaystyle\frac{1}{|B_{1}|}\int_{\Omega_{1}}u^{2}dx
\right)^{\frac{1}{2}}+\displaystyle\frac{1}{\delta_{0}}\left([f]_{\mathcal{L}^{2,\alpha}(0)}+2[g]_{C^{2,\alpha}(0)}\right)}
\triangleq\displaystyle\frac{u(x)}{N},
$$  Where $y=\frac x{R}$ and $x\in B_{1}\cap\Omega$. By choosing $R$ small enough which depends only on $n$ and $K_\Omega$ and $\|S\| $, the normalization conditions hold. Without loss of generality, we assume that $R=1$.

Now, we prove the following claim inductively: there are harmonic polynomials
$$p_k(x)=a^{k}x_n+b_{in}^{k}x_{n}x_{i}-\frac 12 a^{k}x'^TSx'+\frac{1}{2}a^{k}tr(S)x_{n}^{2}$$
such that
\begin{equation}\label{ite}
\left(\frac{1}{|B_{\lambda^{k}}|}\int_{\Omega_{\lambda^{k}}}|u-p_k|^{2}dx\right)^{\frac{1}{2}}\leq \lambda^{k(2+\alpha)},\quad \forall k\geq0.
\end{equation}
and $a^0=0$, $b^0_{in}=0$, and for $k\geq 1$,
\begin{align}\label{co2}
    |a^k-a^{k-1}|&\leq C_0 \lambda^{(\alpha+1) (k-1)}, \\
    |b_{in}^k-b_{in}^{k-1}|&\leq C_0 \lambda^{\alpha{(k-1)}}.
    \end{align}
By the normalized assumption, we know that $k=0$ is just the assumption and the claim holds for $k=1$  since Lemma $\ref{key3}$.
Now let us assume  that the conclusion is true for $k$. Let
$$w(x)=\frac{(u-p_k)(\lambda^{k}x)}{\lambda^{k(2+\alpha)}},\quad \tilde{\Omega}=\{x:\lambda^{k}x\in \Omega\}.
$$
It is easy to check that $w$ is a weak solution to
\begin{eqnarray*}
\left\{
\begin{array}{rcll}
-\Delta w&=& \tilde{f}\qquad&\text{in}~~B_{1}\cap\tilde{\Omega},\\
\tilde{w}&=&\tilde{g}\qquad&\text{on}~~B_{1}\cap\partial\tilde{\Omega}, \\
\end{array}
\right.
\end{eqnarray*}
where $\tilde{f}(x)
=\displaystyle\frac{f(\lambda^{k}x)}{\lambda^{k\alpha}}$, $\tilde{g}(x)
=\displaystyle\frac{(g-p_k)(\lambda^{k}x)}{\lambda^{k(2+\alpha)}}$.
Thus by using the inductive assumption, we obtain that
$$\frac{1}{|B_{1}|}\displaystyle\int_{\tilde{\Omega}_{1}}w^{2}dx=
\frac{1}{|B_{1}|}\displaystyle\int_{\tilde{\Omega}_{1}}\frac{|u-p_k)(\lambda^{k}x)|^{2}}{\lambda^{2k(2+\alpha)}}dx
=\frac{\displaystyle\frac{1}{|B_{\lambda^{k}}|}\displaystyle\int_{\Omega_{\lambda^{k}}}|u-p_k|^{2}dx}{\lambda^{2k(2+\alpha)}}
\leq1,
$$
$$\frac 1{|B_1|}\int_{\tilde{\Omega}_{1}}\tilde{f}^{2}dx=\frac 1{|B_1|}
\int_{\Omega_{1}}\frac{|f(\lambda^{k}x)|^{2}}{\lambda^{2k\alpha}}dx=\frac 1{\lambda^{2k\alpha}|B_{\lambda^k}|}
\int_{\Omega_{\lambda^k}}|f(y)|^{2}dy
\leq [f]^{2}_{\mathcal{L}^{2,\alpha}(0)}
\leq\delta_{0}^{2},
$$
$$K_{\tilde{\Omega}}= \lambda^{k(1+\alpha)}K_{\Omega}\leq \frac{\delta_0}{C_0}.$$
Notice that by definition and the normalization conditions, we have for any $x\in \partial_w\Omega_1$
$$|x_n-\frac{1}{2}x'^TSx'|\leq [\partial\Omega]_{C^{2,\alpha}(0)}|x'|^{2+\alpha}\leq \frac {\delta_0}{C_0}|x'|^{2+\alpha},$$
and
$$|x_n|\leq \frac{C\delta_0}{C_0}|x'|^{1+\alpha}.$$
Now for $x\in \partial \tilde{\Omega}$
From ($\ref{co2}$), we  have   that $|a^{k}|+|b^{k}|\leq C$ for some $C>0$. Then we yield that
\begin{eqnarray*}
\|\tilde{g}\|_{L^{\infty}(\partial\tilde\Omega\cap B_1)}&=&\|\frac{(g-p_k)(x)}{\lambda^{k(2+\alpha)}}\|_{L^{\infty}(\partial\Omega\cap B_{\lambda^{k}})}\\
&\leq& \|\frac{g(x)}{\lambda^{k(2+\alpha)}}\|_{L^{\infty}(\partial\Omega\cap B_{\lambda^{k}})}+\|\frac{a^{k}(x_n-\frac{1}{2}x'^TSx')+b^{k}_{in}x_{n}x_{j}+\frac{1}{2}a^{k}tr(S)x_{n}^{2}}{\lambda^{k(2+\alpha)}}\|_{L^{\infty}(\partial\Omega\cap B_{\lambda^{k}})}\\
&\leq& \frac 12 [g]_{C^{2,\alpha}(0)}+\frac{C\delta_0}{C_0}\\
&\leq & \frac 12 \delta_{0}+\frac 12 \delta_{0}=\delta_0,
\end{eqnarray*}
if we choose $C_0$ such that $\frac{C}{C_0}\leq \frac 12$. Now we can apply Lemma $\ref{key3}$ for $w$ to obtain that
$$\left(\frac{1}{|B_{\lambda}|}\int_{\tilde{\Omega}_{\lambda}}|w-\bar{p}|^{2}dx\right)^{\frac{1}{2}}\leq\lambda^{2+\alpha}
$$ where $\bar{p}(x)=ax_{n}-\frac {a}{2} x'^TSx'+b_{in}x_{n}x_{i}+\frac{a}{2}tr(S)x_{n}^{2} $ with
$|a|+ |b_{in}|\leq C$ for $x\in \tilde{\Omega}_\lambda$.  We scale back to get
$$\left(\frac{1}{|B_{\lambda^{k+1}}|}\int_{{\Omega}_{\lambda^{k+1}}}|u(x)-p_{k}(x)-\lambda^{(2+\alpha)k}\bar{p}(\frac{x}{\lambda^k})|^{2}dx\right)^{\frac{1}{2}}
\leq\lambda^{(k+1)(2+\alpha)} .
$$
Thus we prove the $(k+1)$-th step by choosing $p_{k+1}(x)=p_{k}(x)+\lambda^{(2+\alpha)k}\bar{p}(\frac{x}{\lambda^k})$. It is clear that
$$|a^{k+1}-a^k|\leq \lambda^{(\alpha+1) k}|a|\leq C\lambda^{(\alpha+1) k},$$
and
 $$|b_{in}^{k+1}-b_{in}^{k}|\leq C \lambda^{\alpha k}.$$
It is also clear that ${a^k}$ and $b^k_{in}$ is Cauchy sequence and converge to $a_\infty$ and $b_{\infty,in}$. The limiting polynomial is $$ p_\infty(x)=a^\infty x_n+b^{\infty}_ {in}x_{n}x_{i}-\frac 12 a^{\infty}x'^TSx' +\frac{1}{2}a^{\infty}tr(S)x_{n}^{2} $$ satisfies
$$|p_k(x)-p_\infty(x)|\leq C\lambda^{k(2+\alpha)},~~~\text { for any } |x|\leq \lambda^k. $$
Hence,  for any $r\leq 1$, there is a k with $\lambda^{k+1}\leq r\leq \lambda^{k}$, and that
\begin{eqnarray*}
\frac{1}{|B_{r}|}\int_{{\Omega}_{r}}|u(x)-p_\infty|^{2}dx&\leq&\frac{1}{\lambda^{n}|B_{\lambda^{k}}|}\int_{{\Omega}_{\lambda^{k}}}|u(x)-p_{k}(x)|^{2}dx+\frac{1}{\lambda^{n}|B_{\lambda^{k}}|}\int_{{\Omega}_{\lambda^{k}}}|p_\infty(x)-p_{k}(x)|^{2}dx\\
&\leq& \frac{\lambda^{2k(2+\alpha)}}{\lambda^{n}}+\frac{C\lambda^{2k(2+\alpha)}}{\lambda^{n}}\\
&\leq& Cr^{2(2+\alpha)}
.
\end{eqnarray*}
This implies that $u$ is $C^{2,\alpha}$ at 0 in the $L^{2}$ sense, and
$$
[u]_{\mathcal{L}^{2,2+\alpha}(0)}\leq C\left(\displaystyle\frac{1}{|B_{1}|}\int_{\Omega_{1}}u^{2}dx
\right)^{\frac{1}{2}}+C\left(||f||_{\mathcal{L}^{2,\alpha}(0)}+||g||_{C^{2,\alpha}(0)}\right).
$$
Thus we complete the proof of Theorem \ref{thm3}.

\section*{Data availability }	Data sharing not applicable to this article as no datasets were generated or analyzed during
 the current study.

\section*{Declarations}  No potential conflict of interest was reported by the authors.

\end{document}